\pdfoutput=1

\documentclass[reqno]{amsart}

\usepackage[latin1]{inputenc}

\usepackage{amssymb}
\usepackage{amsmath}
\usepackage{amscd}
\usepackage{amsthm}
\usepackage{amsfonts}
\usepackage{enumerate}
\usepackage{graphicx}
\usepackage{url}
\usepackage[breaklinks=true,hyperref]{hyperref}
\usepackage{amssymb}
\usepackage[]{color}
\usepackage{epsfig}
\usepackage{mathrsfs}

\newtheorem{theorem}{Theorem}[section]
\newtheorem{lemma}[theorem]{Lemma}

\theoremstyle{definition}

\newtheorem{definition}[theorem]{Definition}

\theoremstyle{remark}

\numberwithin{equation}{section}

\newcommand{\kautz}{\text{Kautz}}
\newcommand{\db}{DB}
\newcommand{\nn}{\mathbb{N}}
\newcommand{\zz}{\mathbb{Z}}

\newcommand{\Li}{\mathcal{L}}

\newcommand{\indeg}{\text{indeg}}
\newcommand{\outdeg}{\text{outdeg}}

\author[H. Bidkhori, S. Kishore]{Hoda Bidkhori, Shaunak Kishore}
\date{\today}
\title{Counting the spanning trees of a directed line graph}

\begin{document}

\maketitle
\begin{abstract} The line graph $\Li G$ of a directed graph $G$ has a vertex for every edge of $G$ and an edge for every path of length 2 in $G$. In 1967, Knuth used the Matrix-Tree Theorem to prove a formula for the number of spanning trees of $\Li G$, and he asked for a bijective proof \cite{Kn}. In this paper, we give a bijective proof of a generating function identity due to Levine \cite{Le} which generalizes Knuth's formula. As a result of this proof, we find a bijection between binary de Bruijn sequences of degree $n$ and binary sequences of length $2^{n-1}$. Finally, we determine the critical groups of all the Kautz graphs and de Bruijn graphs, generalizing a result of Levine \cite{Le}. 
\end{abstract} 

\section{Introduction} \label{introduction} 

In a directed graph $G=(V,E)$, each edge $e\in E$ is directed from its source $s(e)$ to its target $t(e)$. The directed line graph $\Li G$ of $G$ with vertex set $E$, and with an edge $(e,f)$ for every pair of edges in $G$ such that $t(e)=s(f)$. A spanning tree of $G$ rooted at a vertex $r$ is an edge-induced subgraph of $G$ in which there is a unique path from $v$ to $r$, for all $v\in V$.

We denote the indegree and outdegree of a vertex $v$ by $\indeg(v)$ and $\outdeg(v)$, respectively, and we denote the number of spanning trees of $G$ by $\kappa(G)$. Knuth proved that if every vertex of $G$ has indegree greater than 0, then 
\[\kappa(\Li G)=\kappa(G)\prod_{v\in V} \outdeg(v)^{\indeg(v)-1}\]
Knuth's proof relied on the Matrix-Tree Theorem. In his paper, he noted that the simple form of this result suggested that a bijective proof was possible, but that it was not at all obvious how to find such a bijection \cite{Kn}.

In fact, there are even stronger relations between $\kappa(\Li G)$ and $\kappa(G)$. Let $\{x_v|v\in V\}$ and $\{x_e|e\in E\}$ be variables indexed by the vertices and edges of $G$.  The vertex and edge generating functions of $G$ are defined as follows, where the sums are taken over all rooted spanning trees $T$ of $G$.
\[\kappa^{edge}(G)=\sum_T \prod_{e\in T}x_e,\ \ \kappa^{vertex}(G)=\sum_T \prod_{e\in T}x_{t(e)}\]
Levine used linear algebraic methods to prove the following generalization of Knuth's result. Our first result in this paper is a bijective proof of Levine's theorem, which yields a bijective proof on Knuth's theorem as a special case.
\\ \\
\noindent \textbf{Theorem~\ref{thm1.1}.} \textit{Let $G=(V,E)$ be a directed graph in which every vertex has indegree greater than 0. Then}
\[\kappa^{vertex}(\mathcal{L}G)=\kappa^{edge}(G)\prod_{v\in V} \left(\sum_{s(e)=v} x_e\right)^{\indeg(v)-1} \]
\\
Using this bijection, we are able to answer the following open question posed by Stanley.
\\ \\ 
\textbf{Exercise 5.73 from \cite{St}.} Let $\mathcal{B}(n)$ be the set of binary de Bruijn sequences of degree $n$, and let $\mathcal{S}_n$ be the set of all binary sequences of length $2^n$. Find an explicit bijection $\mathcal{B}(n)\times \mathcal{B}(n)\rightarrow \mathcal{S}(n)$.
\\ \\ 
The critical group $K(G)$ of a graph $G$ is a finite abelian group whose order is the number of spanning trees of $G$. Critical groups have applications in statistical physics \cite{Ho}, algebraic combinatorics \cite{Le}, and arithmetic geometry \cite{Be}. We review the definition of this group in section \ref{definitions}.

The Kautz graphs $\kautz_n(m)$ and the de Bruijn graphs $\db_n(m)$ are families of iterated line graphs. $\kautz_1(m)$ is the complete directed graph on $m+1$ vertices, without self-loops, and $\db_1(m)$ is the complete graph on $m$ vertices, with self-loops. These families are defined for $n>1$ as follows.
\[\kautz_n(m)=\Li^{n-1}\kautz_1(m),\ \ \db_n(m)=\Li^{n-1}\db_1(m)\]
Levine recently determined $K(\db_n(2))$ and $K(\kautz_n(m))$, where $m$ is prime \cite{Le}. We generalize these results, proving the following characterizations of the critical groups of all the Kautz and de Bruijn graphs.
\\ \\
\noindent \textbf{Theorem~\ref{db}.} \textit{The critical group of $\db_n(m)$ is}
\[K\left(\db_n(m)\right)=\left(\zz_{m^n}\right)^{m-2}\oplus\bigoplus_{i=1}^{n-1} \left(\zz_{m^i}\right)^{m^{n-1-i}(m-1)^2}\]
\\
\noindent \textbf{Theorem~\ref{kautz}.} \textit{The critical group of $\kautz_n(m)$ is}
\[K\left(\kautz_n(m)\right)=\left(\zz_{m+1}\right)^{m-1}\oplus \left(\zz_{m^{n-1}}\right)^{m^2-2}\oplus\bigoplus_{i=1}^{n-2} \left(\zz_{m^i}\right)^{m^{n-2-i}(m-1)^2(m+1)}\]
\\
The rest of this paper is organized as follows. In Section~\ref{definitions} we provide background and definitions. In Section~\ref{bijection}, we introduce a bijection which proves Theorem~\ref{thm1.1}. We apply this bijection in Section~\ref{dbsequence} to construct a bijection betweeen binary de Bruijn sequences of order $n$ and binary sequences of length $2^{n-1}$. Finally, in Section~\ref{kautzsection}, we prove Theorem~\ref{db} and ~\ref{kautz}, giving a complete description of the critical groups of the Kautz and de Bruijn graphs.

\section{Background and definitions} \label{definitions}

In a directed graph $G=(V,E)$, each edge $e\in E$ is directed from its \textit{source} $s(e)$ to its \textit{target} $t(e)$. 

\begin{definition}[Directed line graph] Let $G=(V,E)$ be a directed graph. The \textit{directed line graph} $\mathcal{L}G$ is a directed graph with vertex set $E$, and with an edge $(e,f)$ for every pair of edges $e$ and $f$ of $G$ with $t(e)=s(f)$. \end{definition}

\begin{center}\includegraphics[height=2.4cm]{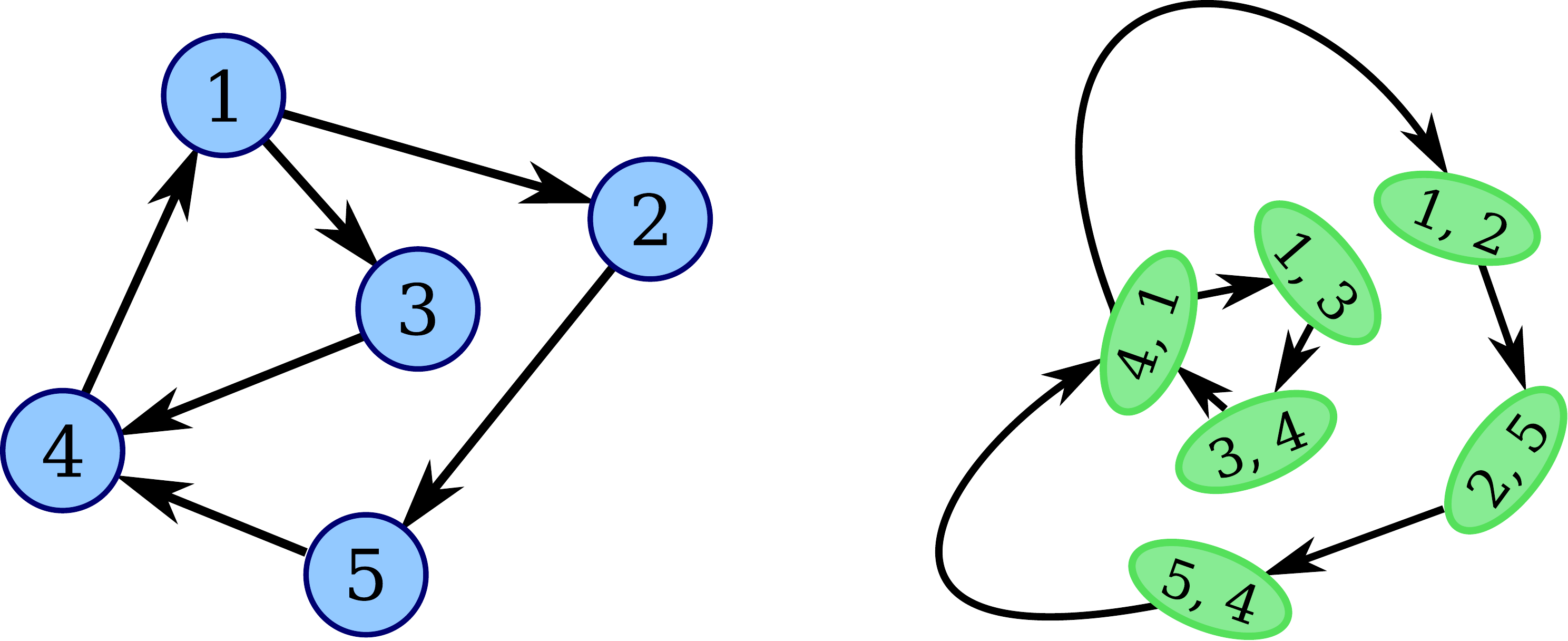}
\\ \textit{Figure 2.1. A directed graph and its line graph.}
\end{center}

At times we may speak of a subset $F$ of $E$ as a subgraphs of $G$ - in this case we mean the subgraph $(V,F)$. If $H$ is a subgraph of $G$ and $v$ is in $H$, we denote the indegree of $v$ in $H$ by $\indeg_H(v)$, and the outdegree by $\outdeg_H(v)$.

\begin{definition}[Oriented spanning tree] Let $G=(V,E)$ be a directed graph. An \textit{oriented spanning tree} of $G$ is an acyclic subgraph of $G$ with a distinguished node, the \textit{root}, in which there is a unique path from every vertex $v\in V$ to the root. We refer to these trees as \textit{spanning trees}. \end{definition}

Let $T$ be a spanning tree of $G$. Every vertex of $G$ has outdegree 1 in $T$, except the root, which has outdegree 0. We denote the number of spanning trees of $G$ by $\kappa(G)$, and the number of spanning trees rooted at $r$ by $\kappa(G,r)$. 

Let $G=(V,E)$ be a strongly-connected directed graph, and let $\zz^V$ be the free abelian group generated by vertices of $G$ -- the group of of formal linear combinations of vertices of $G$. We define $\Delta_v\in \zz^V$, for all $v\in V$, as follows.
\[\Delta_v = \sum_{e\in E\ s.t\ s(e)=v} (t(e)-v) \]
The \textit{sandpile group} $K(G,r)$ with \textit{sink} $r$ is the quotient group 
\[K(G,r)=\zz^V/(r,\Delta_v|v\in V\backslash r)\] 
It is well-known that the order of $K(G,r)$ is $\kappa(G,r)$.

A directed graph $G$ is \textit{Eulerian} if $\indeg(v)=\outdeg(v)$ for all vertices $v$ in $V$. According to Lemma 4.12 of \cite{Ho}, if $G$ is Eulerian, the sandpile groups $K(G,r_1)$ and $K(G,r_2)$ are isomorphic for any two $r_1,r_2$ in $V$. In this case, we call the group the \textit{critical group} $K(G)$.

\begin{definition}[The Laplacian] Let $G=(V,E)$ be a finite directed graph with vertices $v_1,v_2,\ldots v_{|V|}$. The \textit{adjacency matrix} $A(G)$ of $G$ is the $|V|\times |V|$ matrix in which $A(G)_{ij}$ is the multiplicity of the edge $(v_i,v_j)$ in $E$. The \textit{degree matrix} $D(G)$ is the $|V|\times |V|$ diagonal matrix in which $D_{ii}=\outdeg(v_i)$. The \textit{Laplacian} $L(G)$ of $G$ is defined as $A(G)-D(G)$.
\end{definition}
Note that the row vectors of $L(G)$ are the elements $\Delta_v$. We consider $L(G)^T$ as a $\zz$-linear operator on $\zz^V$ -- its image is the subgroup generated by the $\Delta_v$. For a strongly-connected Eulerian graph $G$, the Laplacian has exactly one eigenvalue 0, so for such a graph $G$, we have
\[\zz^V/\text{im}\ L(G)\cong \zz^V/\text{im}\ L(G)^T\cong  \zz\oplus K(G)\]
The following elementary row and column operations on matrices with entries in a ring $R$ are invertible over $R$. 
\begin{enumerate}[-]
\item Permuting two rows (columns)
\item Adding a multiple of a row (column) by an element of $R$ to another row (column)
\item Multiplying the entries of a row (column) by a unit
\end{enumerate}
If $L'$ is obtained from $L(G)$ by invertible row and column operations over $\zz$, then $\zz^V/\text{Im}\ L' \cong \zz^V/\text{Im}\ L(G)$. 

Suppose that $R$ is a principal ideal domain. Under these operations, any matrix with entries in $R$ is equivalent to a matrix in \textit{Smith normal form}. A matrix in this form is diagonal, and its diagonal entries $x_{11},x_{22},\ldots x_{nn}$ are elements of $R$ such that $x_{(i+1)(i+1)}$ is a multiple of $x_{ii}$ for all $i<n$. These entries are called the \textit{invariant factors} of the original integer matrix, and they are unique up to multiplication by units. If the invariant factors of $L(G)$ over $\zz$ are $x_{11},x_{22},\ldots x_{nn}$ then
\[\zz^V/\text{Im}\ L(G) = \bigoplus_{i=1}^n \zz_{x_{ii}}\]
Thus, row-reducing the Laplacian yields information about the critical group.

\section{Counting spanning trees} \label{bijection}
Let $G=(V,E)$ be a directed graph, and let $\{x_v\}_{v\in V}$ and $\{x_e\}_{e\in E}$ be variables indexed by the vertices and edges of $G$. The \textit{edge and vertex generating functions}, which enumerate the spanning trees of $G$, are defined as follows
\[\kappa^{edge}(G)=\sum_T \prod_{e\in T}x_e\]
\[\kappa^{vertex}(G)=\sum_T \prod_{e\in T}x_{t(e)}\]
\noindent where $T$ ranges over all spanning trees of $G$. In this section, we give a bijective proof of the following identity, solving a problem posed by Levine in \cite{Le}

\begin{theorem} \label{thm1.1} Let G=(V,E) be a directed graph in which every vertex has indegree greater than 0. Then:
\begin{equation} \label{kthm} \kappa^{vertex}(\mathcal{L}G)=\kappa^{edge}(G)\prod_{v\in V} \left(\sum_{s(e)=v} x_e\right)^{\indeg(v)-1} \end{equation} \end{theorem}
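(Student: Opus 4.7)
The plan is to construct an explicit weight-preserving bijection between rooted spanning trees of $\Li G$ and the triples on the right-hand side of (\ref{kthm}). I first reformulate both sides in terms of functions: a rooted spanning tree of $\Li G$ with root $r^* \in E$ is the same data as a function $\phi\colon E \setminus \{r^*\} \to E$ with $s(\phi(e)) = t(e)$ whose iterates all reach $r^*$, and its weight is $\prod_{e \neq r^*} x_{\phi(e)}$. Analogously, a rooted spanning tree of $G$ at root $r$ is a function $\psi\colon V \setminus \{r\} \to E$ with $s(\psi(v)) = v$ and the same sort of acyclicity. The key local observation is that at each vertex $v$ of $G$, the restriction of $\phi$ to the incoming edges of $v$ (less $r^*$ if applicable) is a sequence of $\indeg(v) - [v = t(r^*)]$ edges out of $v$, which has exactly the same length as the combined right-hand data at $v$ --- the tree edge $\psi(v)$ (when $v \neq r$) together with the $\indeg(v) - 1$ extras $g_1^v, \ldots, g_{\indeg(v)-1}^v$. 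So the bijection must act vertex by vertex, identifying among the $\phi$-values at $v$ which one is $\psi(v)$ and listing the rest as extras in some order.

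For the forward direction, given $(\phi, r^*)$ I would traverse the $\Li G$-tree from $r^*$ in a canonical BFS order, with ties broken by a fixed total order on $E$, and declare the first-visited incoming edge of each vertex $v$ of $G$ --- call it $e_v^*$ --- to be distinguished; set $\psi(v) := \phi(e_v^*)$ and record the remaining $\phi$-values at $v$ in visit order as the extras $g_i^v$. At $v = r$, the entire sequence (over non-$r^*$ incoming edges) becomes the extras. A simpler naive rule, such as ``take $\phi$ at the fixed-order-first incoming edge of $v$ to be $\psi(v)$,'' fails in general --- small examples produce either a $\psi$ with cycles on the forward side, or a reconstructed $\phi$ with cycles disjoint from $r^*$ on the inverse side --- so the traversal-aware choice is essential. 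The resulting $\psi$ is a genuine spanning tree of $G$ rooted at $r = t(r^*)$ by the following short argument: along any $\psi$-chain starting from $v$, the BFS depth of the distinguished edge at the current vertex strictly decreases, because $\phi(e_v^*)$ lies one level above $e_v^*$ in the $\Li G$-tree and the distinguished edge at $t(\phi(e_v^*))$ has depth at most that of $\phi(e_v^*)$. Hence every $\psi$-orbit terminates, and it must terminate precisely at $r$, where $e_r^* = r^*$ has depth zero.

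For the inverse, given $(T', r, (g_i^v))$, I would grow the $\Li G$-tree iteratively in BFS order starting from the as-yet-undetermined root $r^*$: at each step, the currently processed vertex $v$ of $G$ has its distinguished incoming edge fixed by the BFS rule, and that edge receives $\phi$-value $\psi(v)$ while the remaining incoming edges of $v$ are queued to receive the extras $g_i^v$ in sequence; $r^*$ emerges as the unique incoming edge of $r$ left unassigned by this process. The main obstacle is verifying that this inverse procedure is well-defined and produces an acyclic $\phi$: one must show the construction never gets stuck and that no extraneous cycles form disjoint from $r^*$. This reduces to a structural compatibility claim between the BFS ordering and the tree-edge/extras split, which I would prove by induction on BFS depth, mirroring the forward-direction argument. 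Weight preservation is then automatic since each $\phi$-value is recorded exactly once on either side.
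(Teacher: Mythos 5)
Your setup (encoding a spanning tree of $\Li G$ as a parent function $\phi$, the degree count matching the $\phi$-values at each vertex $v$ against one tree edge plus $\indeg(v)-1$ extras, and the depth-decreasing argument showing the forward map produces an acyclic $\psi$) is sound and parallels the paper's. But the forward map you propose is not injective, so the bijection fails before the inverse is even reached. The problem is structural: for a leaf in-edge $e$ of $v$, its BFS visit time is determined by the position of its parent $\phi(e)$ in the tree; hence permuting the $\phi$-values among leaf in-edges of $v$ permutes their visit order in exactly the compensating way, and the recorded sequence ``$\phi$-values of in-edges of $v$ in visit order'' is unchanged. Concretely, take $G$ with vertices $v,w_1,w_2,h,z$ and edges $f_1=(v,w_1)$, $f_2=(v,w_2)$, $e_1=(h,v)$, $e_2=(h,v)$, $c_1=(w_1,h)$, $c_2=(w_2,h)$, $d_1=(h,z)$, $d_2=(z,w_1)$; every vertex has positive indegree. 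Root the $\Li G$-tree at $r^*=c_1$ and set $\phi(f_1)=c_1$, $\phi(d_2)=c_1$, $\phi(d_1)=d_2$, $\phi(c_2)=d_1$, $\phi(f_2)=c_2$ (most of these are forced). Then $e_1,e_2$ are leaves for either choice of $\phi(e_1),\phi(e_2)\in\{f_1,f_2\}$, and $f_1$ sits at depth $1$ while $f_2$ sits at depth $4$. The two distinct spanning trees $T'_1$ (with $\phi(e_1)=f_1$, $\phi(e_2)=f_2$) and $T'_2$ (with $\phi(e_1)=f_2$, $\phi(e_2)=f_1$) produce identical data: in both, the first-visited in-edge of $v$ is the one at depth $2$, whose $\phi$-value is $f_1$, so $\psi(v)=f_1$ with extra list $(f_2)$, and no other vertex's list is affected since $e_1,e_2$ have no descendants. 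The same collapse occurs under DFS or any other root-down traversal, so this is not fixable by changing the tiebreak; it also explains why your inverse cannot be made well-defined (this right-hand term has two preimages, and by counting some other term has none).

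The paper escapes this by ordering the $\phi$-values at $v$ not by a root-down traversal but by a global leaf-peeling order: repeatedly delete the smallest current leaf $f$ of $T'$ and append $\phi(f)$ to the list at $t(f)$, with the tree edge $\psi(v)$ emerging as the \emph{last} entry rather than the first. In the example above, $e_1<e_2$ are both leaves from the start, so $\phi(e_1)$ is always recorded before $\phi(e_2)$ regardless of which parent each has, and $T'_1$, $T'_2$ yield different arrays. The price of this choice is that injectivity and surjectivity are no longer local statements; they are established by running the building algorithm $\sigma$ (attach the smallest available edge, pop from the front of the target's list) and the peeling algorithm $\pi$ in lockstep and proving they synchronize step by step, via the observation that an edge is a leaf at stage $k$ of $\pi$ exactly when it becomes available at stage $k$ of $\sigma$. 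If you want to salvage your approach, you would need to replace ``first-visited in BFS'' by an ordering of the in-edges of $v$ that can be reconstructed from the array data alone, which is essentially what the paper's peeling order accomplishes.
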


In order to find a bijection, we adopt the following strategy. We put an arbitrary total order on the edges in $E$.
\begin{enumerate}[-] 
\item We provide a bijection between monomial terms on the right-hand side of Eq. (\ref{kthm}) and \textit{tree arrays}, which are arrays of lists, one list for each vertex $v\in V$. 
\item Then we present a map $\sigma$ that take a tree array to a spanning tree of $\Li G$ which contributes the same term to the left-hand side of Eq. (\ref{kthm}). 
\item Finally, we show that $\sigma$ is bijective by constructing an inverse map $\pi$ which takes a spanning tree of $\Li G$ to a tree array.
\end{enumerate}

We define a \textit{list} to be an ordered tuple of edges. We \textit{append} an element $x$ to a list $l$ by adding $x$ to the end of $l$. We \textit{pop} list $l$ by removing the first element of $l$. We denote the number of times an element $e$ appears in a list $l$ by $N(l,e)$ .

Let $v$ be a vertex of $G$ and let $l'_v$ be a list with $\indeg(v)-1$ elements, all of which are edges with source $v$. We map $l'_v$ to a monomial term of $(\sum_{s(e)=v} x_e)^{\indeg(v)-1}$, as follows.
\[l'_v=(e_1,e_2,\ldots e_{\indeg(v)-1})\rightarrow x_{e_1}x_{e_2}\ldots x_{e_{\indeg(v)-1}}\]
This map provides a bijection between lists $l'_v$ and terms of $(\sum_{s(e)=v} x_e)^{\indeg(v)-1}$. Therefore, a term on the right-hand side of Eq. (\ref{kthm}) corresponds to a choice of spanning tree $T$ of $G$ and a choice of one such list $l'_v$ for each vertex $v$. 

Suppose a monomial term on the right-hand side of Eq. (\ref{kthm}) corresponds to a spanning tree $T$ rooted at $r$ and an array of lists $\langle l'_v\rangle$. For each vertex $v\in V\backslash r$, we obtain $l_v$ by appending the unique edge $e$ in $T$ with source $v$ to the list $l'_v$. We obtain $l_r$ by appending a new variable $\Omega$ to $l'_r$. 

Each list $l_v$ has length $\indeg(v)$, for $v\in V$. We call an array of lists $\langle l_v \rangle_{v\in V}$ obtained in this way a \textit{tree array}. By construction, terms on the right-hand side of Eq. (\ref{kthm}) are in bijection with tree arrays. 

We now define the bijective map $\sigma$, which takes a tree array of $G$ to a spanning tree of $\Li G$. \vspace{.1in}

\noindent \textbf{The bijection $\sigma$:} We start with a tree array $\langle l_v \rangle$ and an empty subgraph $T'$ of $\Li G$. Then we run the following algorithm.
\begin{enumerate}[Step 1.]
\item Let $R$ be the subset of edges $e$ of $G$ for which $N(l_{s(e)},e)=0$ and $\outdeg_{T'}(e)=0$. Let $f$ be the smallest edge in $R$ under the order on $E$. 
\item Pop the first element $g$ from the list $l_{t(f)}$. If $g$ is $\Omega$, then $\sigma(\langle l_v\rangle)=T'$. 
\item Otherwise, $g\in E$ and $s(g)=t(f)$. Add the edge $(f,g)$ to $T'$, and then return to step 1.
\end{enumerate}

We also define a map $\pi$ which takes a spanning tree of $\Li G$ to a tree array of $G$. \vspace{.1in}

\noindent \textbf{The inverse map $\pi$:} We start with a spanning tree $T'$ of $\Li G$, and an empty list $l_v$ at each vertex $v\in V$. This map is given by another algorithm. 
\begin{enumerate}[Step 1.]
\item Let $S$ be the set of leaves of $T'$. Let $f$ be the smallest edge in $S$ under the order on $E$.
\item If $f$ is not the root of $T'$, remove $f$ and its outedge $(f,g)$ from $T'$, and append $g$ to $l_{t(f)}$. Go back to step 2.
\item If $f$ is the root of $T'$, append $\Omega$ to $l_{t(f)}$, and return the array of lists.
\end{enumerate}

As an example, we apply $\sigma$ to a tree array in a small directed graph $G$. We order the edges of $G$ by the lexigraphic order.

\begin{center}\includegraphics[height=2.4cm]{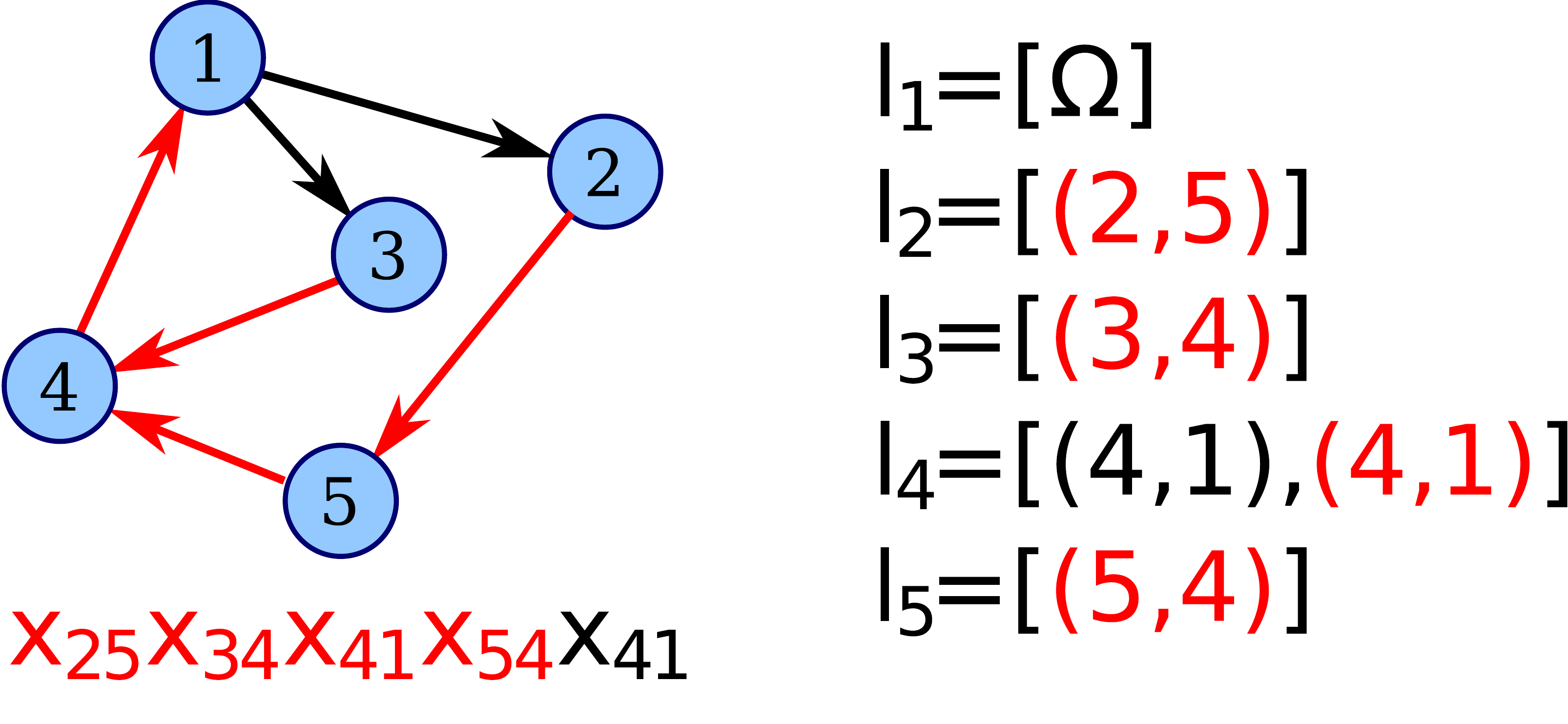}
\\ \textit{Figure 3.1. The graph $G$, with a spanning tree $T$ highlighted in red. Below the graph is a monomial term of $\kappa^{vertex}(G)$, where $x_{ij}$ is the variable for edge $(i,j)$. The tree array corresponding to this term is shown to the right. In the term and the tree array, red elements correspond to edges of the tree.}
\end{center} \vspace{.025in}
\begin{center}\includegraphics[height=2.1cm]{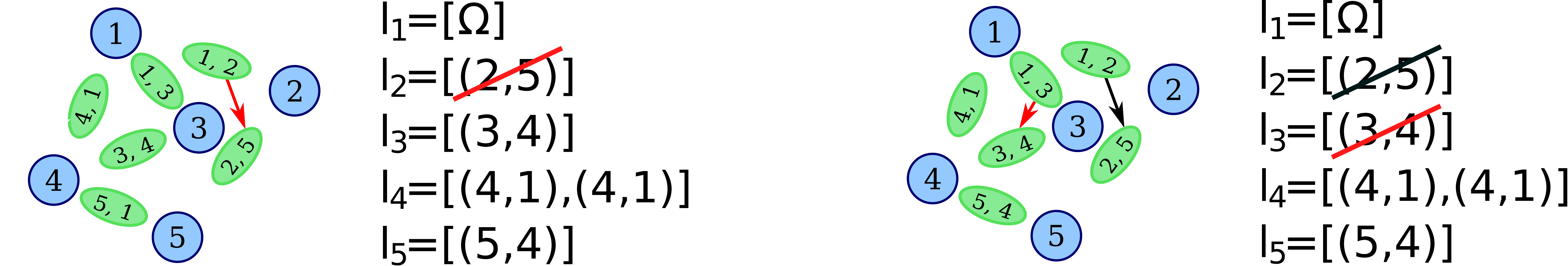}\vspace{.025in}
\\ \textit{Figure 3.2. The first two edges added to $T'$ by the algorithm for $\sigma$. Initially, the edges $(1,2)$ and $(1,3)$ do not appear in the lists. We pop $(2,5)$ from $l_2$ and add the edge $((1,2),(2,5))$ to $T'$. Then the edges $(1,3)$ and $(5,4)$ have outdegree 0 in $T'$ and do not appear in the lists. We pop $(3,4)$ from $l_3$ and add $((1,3),(3,4))$ to $T'$. }
\end{center}
\begin{center}\includegraphics[height=5.1cm]{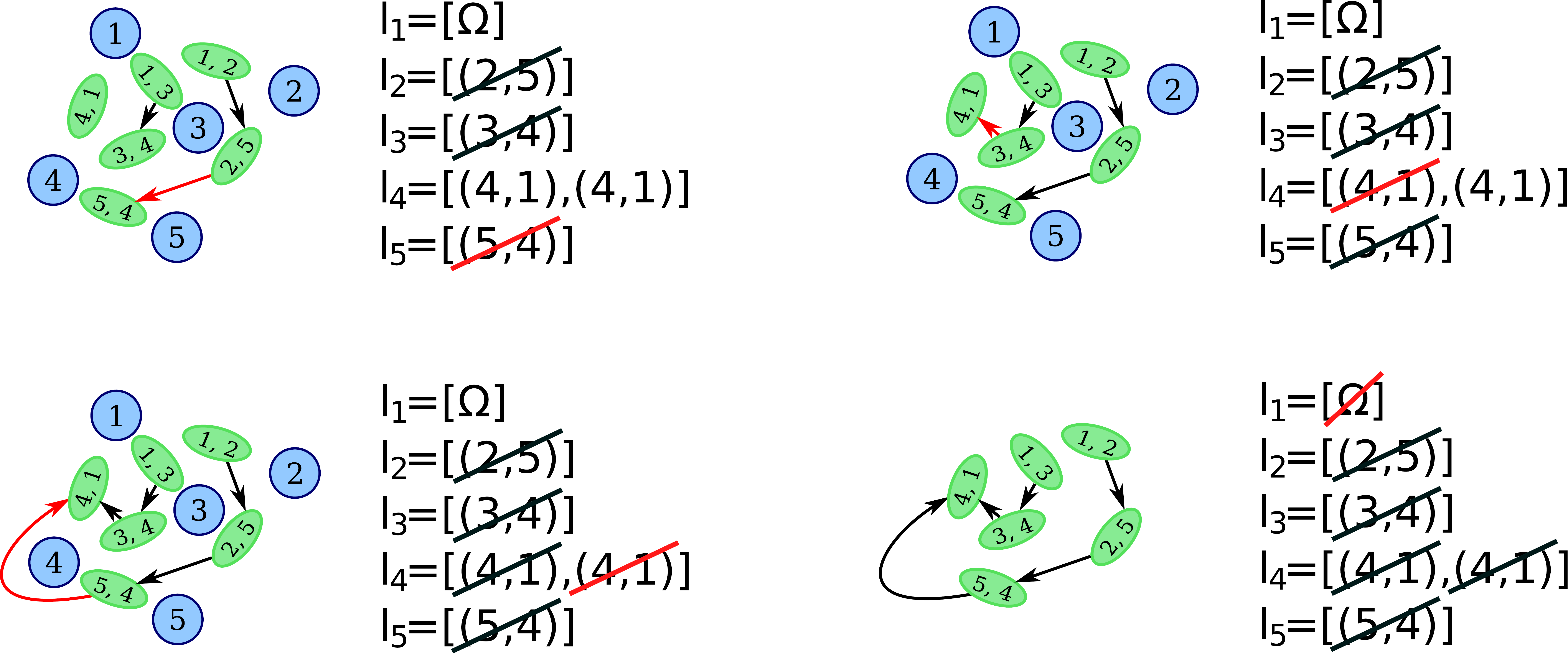}\vspace{.025in}
\\ \textit{Figure 3.3. The last three edges added to $T'$, and the final tree. The last element left in the lists of the tree array is $\Omega$.}
\end{center}

In order to prove Theorem~\ref{thm1.1}, we first prove three lemmas. In the definition of the algorithm for the map $\sigma$, we assumed that the set $R$ is always non-empty in step 1 and that the the list $l_{t(f)}$ is always non-empty in step 2. In Lemma~\ref{welldef}, we show that both assumptions are valid.

\begin{lemma} \label{welldef} The algorithm used to define map $\sigma$ is well-defined: at step 1, the set $R$ is non-empty, and at step 2, the list $l_{t(f)}$ is non-empty. \end{lemma}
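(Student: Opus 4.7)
The plan is to verify both non-emptiness claims by classifying the current status of every edge of $G$; a simple three-way partition of $E$ drives both arguments.

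For the step~2 claim, I would first observe that no edge of $G$ can be selected as $f$ in step~1 more than once: once selected, $\outdeg_{T'}(f)$ becomes $1$ for the remainder of the run, which permanently disqualifies $f$ from $R$. Consequently, when $f$ is chosen in the current iteration, the edges with target $t(f)$ selected in earlier iterations form a subset of $\{e\in E:t(e)=t(f)\}\setminus\{f\}$, which has at most $\indeg(t(f))-1$ elements. Since $l_{t(f)}$ is popped exactly once during each iteration whose selected edge has target $t(f)$, and it started at length $\indeg(t(f))$, it still contains at least one entry at step~2.

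For the step~1 claim, I would partition $E$ according to algorithmic state:
\begin{enumerate}[(a)]
\item $A=\{e\in E:N(l_{s(e)},e)\ge 1\}$ (still in its own list),
\item $B=\{e\in E:N(l_{s(e)},e)=0\text{ and }\outdeg_{T'}(e)=0\}$ (exactly $R$),
\item $C=\{e\in E:\outdeg_{T'}(e)=1\}$ (already processed),
\end{enumerate}
so that $B=R$. The three sets are pairwise disjoint: $\outdeg_{T'}(e)\in\{0,1\}$ for every $e$ by the uniqueness above, and every $e\in C$ satisfied $N(l_{s(e)},e)=0$ at its moment of selection, a property preserved by all subsequent pops. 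After $k$ non-terminating iterations have occurred, $|C|=k$; and since the initial lists jointly contain $\sum_v\indeg(v)=|E|$ entries, one of which is $\Omega$, after $k$ edge-pops the lists still contain exactly $|E|-1-k$ edge entries. Therefore $|A|\le |E|-1-k$, and
\[|R|=|B|=|E|-|A|-|C|\ge |E|-(|E|-1-k)-k=1.\]

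The main obstacle is simply identifying the right three-way classification; the mild subtlety is that $|A|$ is only bounded by, not equal to, the total edge count across all lists (because an edge can appear in $l_{s(e)}$ with multiplicity greater than one), but this weaker inequality is sharp enough to yield $|B|\ge 1$.
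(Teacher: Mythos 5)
Your proof is correct and follows essentially the same route as the paper's: for step 1 you compare the number of edge entries remaining in the lists ($|E|-1-k$) against the number of edges of $G$ still lacking an outedge in $T'$ ($|E|-k$), and for step 2 you note that each pop of $l_{t(f)}$ corresponds to a distinct already-processed edge with target $t(f)$, of which there are at most $\indeg(t(f))-1$ since $f$ itself is unprocessed. Your explicit three-way partition and the multiplicity caveat just make the paper's counting argument more precise; no new ideas are needed or introduced.
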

\begin{proof} After $k$ edges have been added to $T'$, there are $|E|-k$ elements left in all the lists $l_v$, where one of the elements is $\Omega$. There are $|E|-k-1$ edges left in the lists, but there are $|E|-k$ edges of $G$ which do not have an outedge in $T'$, so $R$ must be non-empty in step 1.

Every time we pop $l_v$, we add an edge $(f,g)$ to $T'$, where $t(f)=v$. When we are at step 2, $\outdeg_{T'}(f)=0$, so at most $\indeg(t(f))-1$ of the elements of $l_{t(f)}$ have been popped. Therefore, the list $l_{t(f)}$ is always nonempty at step 2. The algorithm is well-defined. \end{proof} \vspace{.05in} 

The following lemma shows that $\sigma$ takes a tree array corresponding to a term on the right-hand side of Eq. (\ref{kthm}) to a spanning tree which contributes the same term to the left-hand side.

\begin{lemma} \label{correctterm} Suppose that $\langle l_v \rangle$ is a tree array and that $\sigma(\langle l_v \rangle)=T'$. Then $T'$ is a spanning tree of $\Li G$, and $\indeg_{T'}(e)=N(l_{s(e)},e)$, for all $e\in E$. \end{lemma}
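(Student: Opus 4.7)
My plan is to isolate a conservation law that holds throughout the execution of $\sigma$, reduce the indegree identity to an emptiness statement about the lists at termination, prove that emptiness by walking up the input tree $T$, and finally verify that the resulting $T'$ is acyclic and thus a spanning tree.

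First I would observe that each iteration of $\sigma$ preserves the sum $\indeg_{T'}(g) + N(l_{s(g)}, g)$ for every edge $g$: appending $(f,g)$ to $T'$ raises $\indeg_{T'}(g)$ by one and lowers the running count $N(l_{s(g)}, g)$ by one, since $g$ is popped from $l_{t(f)} = l_{s(g)}$. Writing $N_0$ for the count in the original tree array, this sum remains at $N_0(l_{s(g)}, g)$, and in the lemma's statement $N(l_{s(e)}, e)$ refers to this input value. So the identity $\indeg_{T'}(e) = N_0(l_{s(e)}, e)$ is equivalent to the claim that every edge-type element of every list has been popped by termination, i.e.\ that all lists are empty.

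The main obstacle is proving this list emptiness, and I plan to do it by contradiction, propagating up the rooted tree $T$. Let $U = \{v \in V : l_v \text{ is still nonempty at termination}\}$. Because $\Omega$ was appended at the end of $l_r$ and the algorithm only halts upon popping $\Omega$, the list $l_r$ must have been fully drained, so $r \notin U$. Now take any $v \in U \setminus \{r\}$. The tree edge $\tau_v$ (the unique $T$-edge leaving $v$) sits at the back of $l_v$, and since popping is only from the front, $\tau_v$ remains in $l_v$; hence $N(l_{s(\tau_v)}, \tau_v) \ge 1$ throughout the run, $\tau_v$ never enters $R$, is never processed, and has $\outdeg_{T'}(\tau_v) = 0$. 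This means fewer than $\indeg(t(\tau_v))$ edges with target $t(\tau_v)$ have been processed, so fewer than $\indeg(t(\tau_v))$ pops from $l_{t(\tau_v)}$ have occurred, putting the $T$-parent $t(\tau_v)$ into $U$ as well. Iterating along the ancestor path in $T$ forces $r \in U$, the desired contradiction. Hence $U = \emptyset$; all $|E|$ elements have been popped; $T'$ contains exactly $|E|-1$ edges; and the indegree identity follows from the conservation law.

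It remains to verify that $T'$ is an oriented spanning tree of $\Li G$. By construction every vertex $e$ has $\outdeg_{T'}(e) \le 1$, since $e$ only receives an outedge when chosen as $f$ and afterwards $e \notin R$; combined with $|E|-1$ edges over $|E|$ vertices, exactly one vertex $f^{*}$ has $\outdeg_{T'}(f^{*}) = 0$, namely the edge chosen in the final iteration (whose popped element was $\Omega$, forcing $t(f^{*}) = r$). The last piece is acyclicity: if $e_1 \to e_2 \to \cdots \to e_m \to e_1$ were a cycle in $T'$ and $k_i$ the iteration when $(e_i, e_{i+1 \bmod m})$ was added, rotating indices to make $k_1 = \min_i k_i$ would force $N(l_{s(e_1)}, e_1) = 0$ at iteration $k_1$ (since $e_1 \in R$ there), a condition that persists thereafter. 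But at the later iteration $k_m$ one would need to pop $e_1$ from $l_{t(e_m)} = l_{s(e_1)}$ to insert $(e_m, e_1)$, which is impossible; the degenerate self-loop case $m = 1$ with $s(e_1) = t(e_1)$ is ruled out within a single iteration, as $e_1 \in R$ and popping $e_1$ from $l_{s(e_1)}$ are mutually exclusive. So $T'$ is acyclic, following outedges from any vertex terminates at $f^{*}$, and $T'$ is an oriented spanning tree rooted at $f^{*}$.
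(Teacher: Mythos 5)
Your proposal is correct and follows essentially the same route as the paper's proof: the invariance of $\indeg_{T'}(e)+N(l_{s(e)},e)$, the propagation argument along the embedded spanning tree $T$ showing every list is drained (your contradiction via the set $U$ is just the contrapositive of the paper's ``$v$ must be cleared before $w$'' claim), and acyclicity via the observation that the earliest-added edge of a putative cycle pins $N(l_{s(e_1)},e_1)$ at zero, blocking the closing edge. The only differences are cosmetic, e.g.\ your explicit handling of the self-loop case and the minimal-iteration rotation in the acyclicity step.
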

\begin{proof} Let $I(e)$ be the initial value of $N(l_{s(e)},e)$. By the definition of a tree array, the edges which are the last elements of the lists $l_v$ form a spanning tree $T$ of $G$.

We claim that $T'$ is acylic, because the last edge of a cycle is never included in $T'$. While the algorithm is running, suppose that $(e_n,e_1)$ is not an edge of $T'$, and that it completes a cycle $(e_1,e_2),(e_2,e_3),\ldots (e_{n-1},e_n)$ of edges in $T'$. Since $(e_1,e_2)$ was already added to $T'$, $N(l_{s(e_1)},e_1)$ must be 0. Therefore, $(e_n,e_1)$ will never be added to $T'$. 

We say a vertex $v\in V$ is \textit{cleared} if all the elements of its list are popped. Suppose that $e=(v,w)$ is an edge in $T$. The list $l_w$ is cleared when all the edges of $G$ with target $w$ have an outedge in $T'$. Then $w$ can only be cleared after an outedge $(e,f)$ of $e$ is added to $T$. The edge $(e,f)$ can only be added to $T'$ when $N(l_v,e)=0$. Because $e$ is an edge of $T$, it is the last element of $l_v$, so $v$ must be cleared before $w$ can be cleared.

The algorithm terminates when $\Omega$ is popped from $l_r$, which occurs when $r$ is cleared. There is a path $(v,v_1,v_2,\ldots v_k, r)$ in $T$ from any vertex $v$ to $r$. Therefore $r$ can only be cleared after all the vertices on this path are cleared. Thus, all the vertices of $G$ are cleared when the algorithm finishes, so there are $|E|-1$ edges in the subgraph $T'$. 

All the vertices of $\Li G$ has an outedge in $T'$, except one. Since $T'$ is acyclic, it is a spanning tree of $\Li G$. Because $\indeg_{T'}(e)+N(l_{s(e)},e)$ is constant, when the algorithm returns $T'$, $\indeg_{T'}(e)=I(e)$ for all $e\in E$. \end{proof} \vspace{.05in}

In our final lemma, we show that $\pi$ will take a spanning tree $T'$ of $\Li G$ and reconstruct a tree array $\langle l_v\rangle$.

\begin{lemma} \label{Ttree} Suppose $T'$ is a spanning tree of $\Li G$ with root $r'$, and that $\pi$ takes $T'$ to the array of lists $\langle l_v \rangle$. Then $\langle l_v \rangle$ is a tree array, which means that
\begin{enumerate}[(a)]
\item The length of $l_v$ is $\indeg(v)$, for all $v\in V$.
\item Every element of $l_v$ is an edge with source $v$, for all vertices $v$ except $t(r')$. The last element of $l_{t(r')}$ is $\Omega$, and every other element of $l_{t(r')}$ is an edge with source $t(r')$.
\item The set $T$ of edges which are the last elements of the lists $\{l_v | v\in V\backslash t(r')\}$ is a spanning tree of $G$.
\end{enumerate}
\end{lemma}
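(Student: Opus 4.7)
The plan is to verify conditions (a), (b), (c) in turn. Conditions (a) and (b) amount to direct bookkeeping on the behavior of $\pi$, so I expect each to take only a line or two. Condition (c) is the heart of the lemma and requires a more careful argument about the leaf-removal order on $T'$.

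For (a), I would observe that each vertex $f$ of $\Li G$ is processed exactly once by $\pi$ (either as a non-root leaf in Step 2 or as the root in Step 3), and each processing appends exactly one element to $l_{t(f)}$: the outedge $g$ of $f$ in $T'$, or $\Omega$ if $f=r'$. The vertices $f$ with $t(f)=v$ are precisely the $\indeg(v)$ edges of $G$ targeting $v$, so $|l_v|=\indeg(v)$ at termination. For (b), every element $g$ appended in Step 2 comes from an edge $(f,g)\in T'\subseteq\Li G$, so by the definition of $\Li G$ we have $s(g)=t(f)$, matching the index of the list it enters. The only $\Omega$ ever appended is inserted in Step 3 when $f=r'$, and lands in $l_{t(r')}$. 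Since the root of a rooted tree is always removed last under leaf-removal, that $\Omega$ is the final entry of $l_{t(r')}$.

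For (c), define $\tau(v)$ to be the last element of $l_v$ for each $v\neq t(r')$, and set $T=\{\tau(v):v\neq t(r')\}$. By (a) and (b), $T$ contains exactly one outedge at each vertex other than $t(r')$, so $|T|=|V|-1$, and it suffices to show $T$ is acyclic. Suppose toward a contradiction that $T$ contains a cycle $v_1\to v_2\to\cdots\to v_k\to v_1$, with edges $e_i=\tau(v_i)$ and indices read mod $k$. For each $i$, let $f_i$ be the last vertex of $\Li G$ with $t(f_i)=v_i$ processed by $\pi$; then by construction $(f_i,e_i)\in T'$, so $f_i$ is a proper descendant of $e_i$ in $T'$ and is therefore processed strictly before $e_i$. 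On the other hand, $e_i$ itself is an edge of $G$ with $t(e_i)=v_{i+1}$, and since $f_{i+1}$ is by definition the \emph{last} such edge processed, $e_i$ is processed at or before $f_{i+1}$. Writing $\prec$ for ``processed strictly before'' and $\preceq$ for ``at or before'', chaining around the cycle yields $f_1\prec e_1\preceq f_2\prec e_2\preceq\cdots\prec e_k\preceq f_1$, i.e.\ $f_1\prec f_1$, which is the desired contradiction.

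The main obstacle is (c): one needs to identify the correct witness $f_i$ at each vertex of the hypothetical cycle and recognize that the leaf-removal order on $T'$ encodes two distinct orderings at once --- the tree order on $T'$ (children before parents), and the ``last-into-$v$'' order that defines $\tau$. Once this observation is in place, the cyclic chain of strict and non-strict inequalities produces the contradiction essentially automatically.
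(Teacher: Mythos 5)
Your proposal is correct and matches the paper's argument in all essentials: parts (a) and (b) are the same bookkeeping, and for (c) your witness $f_i$ (the last edge into $v_i$ to be processed) is exactly the moment the paper calls $v_i$ being ``filled,'' with your chain $f_i\prec e_i\preceq f_{i+1}$ being precisely the paper's claim that the filling order is a topological sort of $T$. The only difference is presentation --- you derive a contradiction around a hypothetical cycle where the paper exhibits the topological order directly --- so this counts as essentially the same proof.
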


\begin{proof} We first show parts (a) and (b). Each time an edge $e\in E$ is removed from $T'$, an element is appended to the list $l_{t(e)}$. Since $r'$ can only be removed after all the other vertices of $T'$, this algorithm adds $\indeg(v)$ elements to $l_v$ for all $v\in V$, so part (a) holds. Every element of the list $l_v$ is an edge with source $v$, with the exception of $\Omega$, which is the last element of $l_r$, so part (b) holds. 

While the algorithm $\pi$ is running, say a vertex $v\in V$ is \textit{filled} if $l_v$ has $\indeg(v)$ elements. Every vertex is eventually filled, so the order in which vertices are filled is a total order on $V$. 

We claim that this order is a topological sort of the subgraph $T$. Suppose that $f=(v,w)$ is the last element of $l_v$ for some vertex $v$ other than $r$. Vertex $v$ was filled at step 2, right after some leaf $e$ and some edge $(e,f)$ were removed from $T'$ in step 1. However, $w$ cannot be filled until $f$ is removed from $T'$, which happens after $e$ and $(e,f)$ are removed from $T'$. Therefore $v$ is filled before $w$, so the filling ordering on $V$ is a topological sort of $T$, and $T$ is acyclic. 

After $\pi$ terminates, $t(r')$ has no outedge in $T$ and every other vertex of $G$ has one outedge. Then $T$ is a spanning tree of $G$, and part (c) holds. \end{proof} \vspace{.05in}

We now prove the main result.

\begin{proof}[\textbf{Proof of Theorem~\ref{thm1.1}}] Let $\langle l_v \rangle$ be a tree array and let $T'=\sigma(\langle l_v \rangle)$. We show the following claim by induction on $n$: after $n$ edges have been added to $T'$ by the algorithm for $\sigma(\langle l_v \rangle)$, and $n$ edges have been removed from $T'$ by the algorithm for $\pi(T')$, we have
\begin{enumerate}[(a)]
\item The set of edges added to $T'$ by $\sigma$ is the set of edges removed by $\pi$.
\item The elements popped from $l_v$ by $\sigma$ are exactly the elements added to $l_v$ by $\pi$, in the same order.
\end{enumerate} 
In the base case $n=0$, both claims hold trivially. Suppose both results hold for $n=k$. The edge $e$ is a leaf of $T'$ in $\pi$ if and only if it satisfies $N(l_{s(e)},e)=0$ and $\outdeg_{T'}(e)=0$ in $\sigma$. 

Therefore, the $(k+1)$st edge $(f,g)$ added to $T'$ by $\sigma$ is also the $(k+1)$st edge removed from $T'$ by $\pi$, and the element $g$ popped from $l_{t(f)}$ in $\sigma$ is also the element appended to $l_{t(f)}$ by $\pi$. Both claims hold for $n=k+1$. By induction, they hold for all $n\le |E|-1$. 

When $n=|E|-1$, condition (b) implies that $\pi(T')=\langle l_v \rangle$. Then $\pi$ is a left inverse of $\sigma$, and $\sigma$ is injective. 

By similar reasoning, $\pi$ is a right inverse of $\sigma$, and $\sigma$ is surjective. So $\sigma$ is a bijection between tree arrays in $G$ and spanning trees of $\Li G$. The bijection $\sigma$ induces between equal terms in Eq. (\ref{kthm}) proves Theorem~\ref{thm1.1}.
\end{proof}

\section{The de Bruijn bijection} \label{dbsequence} 

A \textit{binary de Bruijn sequence of degree $n$} is a cyclic binary sequence $B$ such that every binary sequence of length $n$ appears as a subsequence of consecutive elements of $B$ exactly once. For example, 0011 is a binary de Bruijn sequence of degree 2, since its cyclic subsequences of length 2 are 00, 01, 11, and 10. 

It is well-known that there are $2^{2^{n-1}}$ binary de Bruijn sequences of degree $n$. Stanley posed the following open problem in \cite{St}. \vspace{.1in}

\noindent \textbf{Exercise 5.73 of \cite{St}.} Let $\mathcal{B}(n)$ be the set of binary de Bruijn sequences of degree $n$, and let $\mathcal{S}_n$ be the set of all binary sequences of length $2^n$. Find an explicit bijection $\mathcal{B}(n)\times \mathcal{B}(n) \rightarrow \mathcal{S}_n$. \vspace{.1in}

Our solution to this problem involves the de Bruijn graphs, which are closely related to de Bruijn sequences.

\begin{definition}[de Bruijn graph] The \textit{de Bruijn graph} $\db_n(m)$ has $m^n$ vertices, which are identified with the strings of length $n$ on $m$ symbols. The edges of the graph are labeled with the strings of length $n+1$ on $m$ symbols. The edge $s_0s_1\ldots s_n$ has source $s_0s_1\ldots s_{n-1}$ and target $s_1s_2\ldots s_n$. \end{definition}

An edge of $\db_n(m)$ can be identified with the vertex of $\db_{n+1}(m)$ that is labeled with the same string of length $n+1$. With this identification, we have
\[\db_n(m)=\Li\db_{n-1}(m)\]
Each vertex $v=s_0s_1\ldots s_{n-1}$ of $DB_n(2)$ has two outedges, $s_1s_2,\ldots s_{n-1}0$ and $s_1s_2\ldots s_{n-1}1$. We call these edges the \textit{zero edge} of $v$ and the \textit{one edge} of $v$, respectively.

It is well-known that binary de Bruijn sequences of degree $n$ are in bijection with Hamiltonian paths in $DB_n(2)$. Let $B=b_0b_1\ldots b_{2^n-1}$ be a binary de Bruijn sequence of degree $n$. Let $v_i=b_ib_{i+1}\ldots b_{i+n-1}$, for $0\le i\le 2^n-1$, where indices are taken mod $2^n$. The path $(v_0,v_1,\ldots v_{2^n-1})$ is the corresponding Hamiltonian path in $DB_n(2)$. 

\begin{theorem} There is an explicit bijection between $\mathcal{B}(n)$ and the set of binary sequences of length $2^{n-1}$, for $n>1$. \end{theorem}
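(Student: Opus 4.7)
The plan is to chain three explicit bijections: from $\mathcal{B}(n)$ to Hamiltonian paths in $\db_n(2)$, then via $\sigma$ to a distinguished class of tree arrays of $\db_{n-1}(2)$, and finally to binary sequences of length $2^{n-1}$. For the first step I would use the correspondence described just above the theorem: a binary de Bruijn sequence $B=b_0 b_1\cdots b_{2^n-1}$ of degree $n$ yields the Hamiltonian path $(v_0,\ldots,v_{2^n-1})$ in $\db_n(2)$ with $v_i=b_i b_{i+1}\cdots b_{i+n-1}$, and this correspondence is bijective. Viewed as a subgraph, a Hamiltonian path in $\db_n(2)$ is exactly a spanning tree in which every vertex has indegree at most $1$.

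Because $\db_n(2)=\Li\db_{n-1}(2)$, Theorem~\ref{thm1.1} identifies spanning trees of $\db_n(2)$ with tree arrays of $\db_{n-1}(2)$. By Lemma~\ref{correctterm}, $T'=\sigma(\langle l_v\rangle)$ satisfies $\indeg_{T'}(e)=N(l_{s(e)},e)$, so $T'$ is a Hamiltonian path precisely when no list $l_v$ repeats an out-edge. Since every vertex of $\db_{n-1}(2)$ has $\indeg=\outdeg=2$, this forces: for each non-root vertex $v$, the list $l_v$ is one of the two orderings of the pair consisting of the $0$-edge and the $1$-edge of $v$ (with the tree edge second, so $l'_v$ is uniquely the non-tree out-edge); and for the root $r$, $l_r=(l'_r,\Omega)$ with $l'_r$ one of the two out-edges of $r$. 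I will call these \emph{Hamiltonian tree arrays}. Since a rooted spanning tree forces all non-root lists, Hamiltonian tree arrays are parametrized by pairs (rooted spanning tree of $\db_{n-1}(2)$, binary choice of $l'_r$), giving $\kappa(\db_{n-1}(2))\cdot 2 = 2^{2^{n-1}}=|\mathcal{B}(n)|$.

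To finish, I would biject Hamiltonian tree arrays of $\db_{n-1}(2)$ with binary sequences of length $2^{n-1}$, leveraging the coincidence $|V(\db_{n-1}(2))|=2^{n-1}$. A natural approach is to assign to each vertex $v$ a single bit encoding some aspect of its list $l_v$, for instance the last symbol of the first entry $l'_v$; the inverse then runs the $\sigma$ algorithm on the reconstructed tree array, using the terminating step (where $\Omega$ is popped) to identify the root and thereby determine which list entries represent tree edges versus the free choice $l'_r$. The main obstacle is making this encoding genuinely bijective: the naive ``last bit of $l'_v$'' choice fails because Hamiltonian tree arrays with different roots can share the same pattern of distinguished edges, so the encoding must be refined (e.g.\ by incorporating into the bit at each vertex a parity that allows the root to be recovered from the reverse $\sigma$-trace). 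Verifying that a correctly refined encoding really does invert cleanly is the heart of the argument; once accomplished, composing it with the previous two steps yields the claimed explicit bijection.
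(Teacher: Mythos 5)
Your first two steps match the paper and are correct: de Bruijn sequences of degree $n$ correspond to Hamiltonian paths in $\db_n(2)$, and applying $\pi$ to such a path yields a tree array of $\db_{n-1}(2)$ whose non-root lists are forced (a Hamiltonian path has all indegrees at most $1$, so by Lemma~\ref{correctterm} no list may repeat an out-edge), so the data is exactly a rooted spanning tree $T_{n-1}$ of $\db_{n-1}(2)$ together with one free bit at its root; your count $\kappa(\db_{n-1}(2))\cdot 2=2^{2^{n-1}}$ is also right. But the last step, which you yourself flag as ``the heart of the argument,'' is missing, and the direction you sketch for it is not the one that works. Assigning one bit per vertex of $\db_{n-1}(2)$ to the Hamiltonian tree array cannot be done in one shot: after spending one bit on the root choice, the remaining $2^{n-1}-1$ bits must encode the spanning tree $T_{n-1}$ itself, and ``being a spanning tree'' is a global acyclicity constraint that no local per-vertex bit assignment (however refined by parities) captures directly. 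You would in effect be re-proving the theorem with spanning trees in place of Hamiltonian paths.

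The missing idea is to recurse down the tower of line graphs. Since $\db_{k+1}(2)=\Li\,\db_k(2)$, the maps $\sigma$ and $\pi$ of Section~\ref{bijection} biject spanning trees of $\db_{k+1}(2)$ with tree arrays of $\db_k(2)$; such a tree array consists of a spanning tree $T_k$ of $\db_k(2)$ (the last entries of the lists) together with a free choice of first entry at each of the $2^k$ vertices, i.e.\ exactly $2^k$ bits. Iterating $\pi$ from $T_{n-1}$ down to a spanning tree $T_1$ of $\db_1(2)$ (which is determined by one bit, its root) converts $T_{n-1}$ into $1+\sum_{k=1}^{n-2}2^k=2^{n-1}-1$ bits, and together with the single free bit at the top level this yields the $2^{n-1}$-bit string. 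Each stage is inverted by running $\sigma$, so the composite is the desired explicit bijection; this recursive descent is precisely what the paper's proof does.
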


\begin{proof} We describe a bijection between Hamiltonian paths in $\db_n(2)$ and binary sequences of length $2^{n-1}$. By composing this bijection with the map between de Bruijn sequences and Hamiltonian paths, we construct the desired bijection.

We order the vertices in $DB_k(2)$ by the lexicographic order on their associated binary strings, for $1\le k\le n$. Let $(v_1,\ldots v_{2^n})$ be a Hamiltonian path in $DB_n(2)$. This path is an oriented spanning tree of $DB_n(2)$, so we can apply the inverse map $\pi$ defined in Section \ref{bijection} to it.

Let $A_{n-1}$ be the tree array $A_{n-1}=\pi(v_1,\ldots v_{2^n})$. We recursively define a sequence of tree arrays $A_k$, for $1\le k\le n-1$. Suppose we have a tree array $A_{k+1}$ in $DB_{k+1}(2)$. Let $T_{k+1}$ be the spanning tree consisting of the edges which are the last elements of the lists in $A_{k+1}$. We define $A_k$ to be $\pi(T_{k+1})$.

We construct a binary sequence $s_1s_2\ldots s_{2^{n-1}}$ from these tree arrays. We denote vertex $w$'s list in the tree array $A_k$ by $(A_k)_w$. Let $s_{2^{n-1}}$ be 0 if the first element of $(A_{n-1})_{s(v_{2^n})}$ is the zero edge of $s(v_{2^n})$, and 1 otherwise. 

We define $s_{2^k}$ through $s_{2^{k+1}-1}$, for $1\le k\le n-2$, as follows. Let $w_1,w_2,\ldots w_{2^k}$ be the vertices of $\db_k(2)$, in lexicographic order. Let $s_{2^k+i-1}$ be 0 if the first element of $(A_k)_{w_i}$ is the zero edge of $w_i$, and 1 otherwise. Let $s_1$ be 0 if $T_1$ is rooted at vertex 0, and 1 otherwise. 

The string $s_1s_2\ldots s_{2^{n-1}}$ is the binary sequence that corresponds to the Hamiltonian path we began with.

Now we construct the inverse map, from binary sequences to Hamiltonian paths. Given any binary sequence $S$ of length $2^{n-1}$, we use the first $2^{n-1}-1$ characters of the sequence to invert the previous procedure and construct a sequence of spanning trees $T_1,T_2,\ldots T_{n-1}$. The tree $T_k$ will be a spanning tree of $DB_k(2)$.  

We determine $T_k$ recursively. The tree $T_1$ in $DB_1(2)$ is rooted at 0 if $s_1$ is 0, and rooted at 1 otherwise. Assume that the first $2^k-1$ characters of $S$ determine a spanning tree $T_k$ of $DB_k(2)$, where $k\le n-2$. We choose a tree array $A_k$ of $DB_k(2)$ using this tree and the next $2^k$ characters of $S$, as follows. 

Let the vertices of $DB_k(2)$ be $w_1,w_2,\ldots w_{2^k}$, in lexicographic order. The first element of $(A_k)_{w_i}$ is the zero edge of $w_i$ if $s_{2^k+i-1}$ is 0, and the one edge of $w_i$ otherwise. The second element of $(A_k)_{w_i}$ comes from $T_k$. We define $T_{k+1}$ to be $\sigma(A_k)$, using the map defined in Section \ref{bijection}. 

We use $T_{n-1}$ to construct a tree array $A_{n-1}$ such that $\sigma(A_{n-1})$ is a Hamiltonian path in $DB_n(2)$. Let $r$ be the root of $T_{n-1}$, and let $v$ be another arbitrary vertex. The list $l_v$ in the array $A_{n-1}$ must contain two distinct edges, if $\sigma(A_{n-1})$ is a Hamiltonian path. The second edge in $l_v$ must be the unique edge in $T_{n-1}$ with source $v$, so $l_v$ is determined. Our only remaining choice is which of the two edges of $DB_{n-1}(2)$ with source $r$ to include in $l_r$, which we determine by $s_{2^{n-1}}$. 

Clearly, this map from binary sequences to Hamiltonian paths inverts the map from Hamiltonian paths to binary sequences. Therefore, our first map is the bijection we need. \end{proof} \vspace{.05in}

This bijection can easily be generalized to count the $k$-ary de Bruijn sequences, in which the 2-symbol alphabet $\{0,1\}$ is replaced with the $k$-symbol alphabet $\{0,1,\ldots k-1\}$.  

\section{The Kautz and de Bruijn graphs} \label{kautzsection}

In this section, we determine the critical groups of all the Kautz graphs and the de Bruijn graphs. The critical groups of these graphs have been found in some special cases by Levine \cite{Le}. 

The Kautz graphs are similar to the de Bruijn graphs, except that the vertices are indexed by \textit{Kautz strings}. A Kautz string is a string in which no two adjacent characters are the same.

\begin{definition}[Kautz graph] The \textit{Kautz graph} $\kautz_n(m)$ has $(m+1)m^{n-1}$ vertices, identified with the Kautz strings of length $n$ on $m+1$ symbols. The edges of the graph are labeled with the Kautz strings of length $n+1$ on $m+1$ symbols, such that the edge $s_0s_1\ldots s_n$ has source $s_0s_1\ldots s_{n-1}$ and target $s_1s_2\ldots s_n$. \end{definition} 

We also consider the Kautz and de Bruijn graphs as families of iterated line graphs. $\kautz_1(m)$ is the complete directed graph on $m+1$ vertices, without self-loops, and $\db_1(m)$ is the complete directed graph on $m$ vertices, with self-loops. Then for $n>1$, we have
\[\kautz_{n+1}(m)=\Li\kautz_n(m)=\Li^n\kautz_1(m)\]
\[\db_{n+1}(m)=\Li\db_n(m)=\Li^n\db_1(m)\]
We say a directed graph $G=(V,E)$ is \textit{balanced k-regular} if $\indeg(v)=\outdeg(v)=k$ for all $v\in V$. Both $\kautz_n(m)$ and $\db_n(m)$ are balanced $m$-regular, for all $n\in \nn$, which implies that they are Eulerian. Since these graphs are also strongly-connected, their critical groups are defined.

Levine found the critical groups of the de Bruijn graphs $\db_n(2)$ and the Kautz graphs $K_n(p)$, where $p$ is prime \cite{Le}. In this section we characterize the critical groups of all the Kautz and de Bruijn graphs. We prove the following theorems.

\begin{theorem} \label{db} The critical group of $\db_n(m)$ is
\[K\left(\db_n(m)\right)=\left(\zz_{m^n}\right)^{m-2}\oplus\bigoplus_{i=1}^{n-1} \left(\zz_{m^i}\right)^{m^{n-1-i}(m-1)^2}\]
\end{theorem}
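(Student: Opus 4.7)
The plan is induction on $n$ via the line-graph relation $\db_n(m)=\Li\db_{n-1}(m)$. For the base case $n=1$, the graph $\db_1(m)$ is the complete digraph on $m$ vertices with self-loops, so $L(\db_1(m))=J_m-mI_m$; a short sequence of elementary $\zz$-linear row and column operations (subtract row $m$ from each earlier row, use the row-sum-zero identity to clear column $1$, then clean up) reduces this to $\operatorname{diag}(1,m,\ldots,m,0)$ with $m-2$ copies of $m$, giving $K(\db_1(m))=(\zz_m)^{m-2}$ as required.

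For the inductive step, I would first record the matrix identity
\[
L(\Li G)=CB^T - mI_{|E|},\qquad B^TB=C^TC=mI_{|V|},\qquad B^TC=A(G),
\]
valid for any balanced $m$-regular graph $G$, where $B$ and $C$ denote the source and target incidence matrices of $G$. Choosing an integer section of $B^T$ (pick a reference outgoing edge $e_v$ at each vertex $v$ and set $\iota(e_v)=e_{e_v}$) produces a splitting $\zz^{|E|}=\iota(\zz^{|V|})\oplus\ker(B^T)$ in which
\[
L(\Li G)\;\sim\;\begin{pmatrix} L(G) & 0 \\ M & -mI_{|E|-|V|}\end{pmatrix}
\]
for an explicit twist $M:\zz^{|V|}\to\ker(B^T)$ depending on the section. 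This block form realizes the cokernel as an extension
\[
0 \to \ker(B^T)\big/\bigl(m\ker(B^T)+M\ker L(G)\bigr) \to \operatorname{coker} L(\Li G) \to \operatorname{coker} L(G) \to 0,
\]
whose left term is an elementary abelian $m$-group.

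Applied to $G=\db_{n-1}(m)$, the right term is $\zz\oplus K(\db_{n-1}(m))$ by induction, and a direct count shows the left term has $\zz_m$-rank exactly $m^{n-1}(m-1)-1$. The crux of the argument is pinning down the extension class. I would prove it is \emph{maximally non-split}: each of the $m^{n-1}-m^{n-2}-1$ non-trivial cyclic torsion summands $\zz_{m^i}$ of $K(\db_{n-1}(m))$ lifts to a $\zz_{m^{i+1}}$-summand of $\operatorname{coker} L(\db_n(m))$, absorbing exactly one $\zz_m$-summand of the kernel per lift. Counting, this leaves
\[
m^{n-1}(m-1)-1 - \bigl(m^{n-1}-m^{n-2}-1\bigr) = m^{n-2}(m-1)^2
\]
$\zz_m$-summands untouched, which is exactly the new $i=1$ term in the formula at level $n$. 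Reindexing the promoted summands ($i\mapsto i+1$) then recovers the full claimed decomposition.

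The hardest part is verifying the maximal non-splitness. My approach is to carry the induction with an explicit Smith-normal-form basis of $L(\db_{n-1}(m))$ compatible with the $\iota\oplus\ker(B^T)$ decomposition, and to use the identity $B^T L(\Li G) = L(G)B^T$ (which follows from $B^TC=A(G)$ and $B^TB=mI$) to give a clean snake-lemma style description of the connecting map. Maximal non-splitness then reduces to showing that $M$ sends each Smith-basis generator of a torsion summand of $L(\db_{n-1}(m))$ to an element of $\ker(B^T)$ that is primitive modulo $m$ — a concrete linear-algebra computation made tractable by the recursive product structure of the de Bruijn graphs.
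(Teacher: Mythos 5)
Your framework is sound and genuinely different from the paper's: the block form of $L(\Li G)$ relative to the splitting $\zz^{|E|}=\iota(\zz^{|V|})\oplus\ker(B^T)$, the resulting extension with kernel killed by $m$, and all of the rank and order bookkeeping check out (in particular $M(\mathbf{1})\equiv C\mathbf{1}\pmod m$ is indeed unimodular in $\ker(B^T)$, which accounts for the $-1$, and the final count $m^{n-1}(m-1)-1-(m^{n-1}-m^{n-2}-1)=m^{n-2}(m-1)^2$ is correct). The gap is at exactly the point you flag as hardest: maximal non-splitness is asserted, not proven, and the plan you sketch for it is both insufficient and, as described, not executable. Insufficient, because showing that $M$ sends each torsion Smith-basis generator to a \emph{primitive} element of $\ker(B^T)\otimes\zz_m$ only controls each cyclic summand's extension class in $\mathrm{Ext}^1(\zz_{m^i},(\zz_m)^r)\cong(\zz_m)^r$ separately; to conclude that $\operatorname{coker}L(\Li G)$ is the direct sum of the promoted summands $\zz_{m^{i+1}}$ and untouched copies of $\zz_m$, you need the $m^{n-1}-m^{n-2}-1$ classes to be \emph{jointly} unimodular, i.e.\ to span a free direct summand of $(\zz_m)^r$ -- individual primitivity does not give this, and for composite $m$ you must also rule out the intermediate extensions of $\zz_{m^i}$ by $\zz_m$ that occur whenever a class is a nonzero nonunit. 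Not executable, because ``carrying an explicit Smith-normal-form basis of $L(\db_{n-1}(m))$ through the induction'' presupposes the structure you are trying to establish; no such basis is produced, and producing one is harder than the theorem.

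The paper sidesteps the extension class entirely with three softer inputs: Levine's isomorphism $mK(\Li G)\cong K(G)$ for balanced $m$-regular $G$ (of which your intertwining identity $B^TL(\Li G)=L(G)B^T$ is the germ -- note this is an isomorphism onto the image of multiplication by $m$, strictly stronger than the surjection your exact sequence provides); Lemma~\ref{divbym}, which says every invariant factor of $L(\db_n(m))$ is either coprime to $m$ or divisible by $m$, proven by a row reduction organized along a cycle meeting each similarity class of vertices exactly once; and the order $|K(\db_n(m))|=m^{m^n-n-1}$. For each prime $p\mid m$ these three facts pin down every exponent in $\mathrm{Syl}_p$ by elementary arithmetic, with no need to understand how the extension sits together. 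If you want to keep your route, the most economical repair is to replace the non-splitness computation by proving a divisibility statement of the Lemma~\ref{divbym} type together with the isomorphism $mK(\Li G)\cong K(G)$; your setup already contains the ingredients for the latter.
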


\begin{theorem} \label{kautz} The critical group of $\kautz_n(m)$ is
\[K\left(\kautz_n(m)\right)=\left(\zz_{m+1}\right)^{m-1}\oplus \left(\zz_{m^{n-1}}\right)^{m^2-2}\oplus\bigoplus_{i=1}^{n-2} \left(\zz_{m^i}\right)^{m^{n-2-i}(m-1)^2(m+1)}\]
\end{theorem}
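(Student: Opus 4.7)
The plan is to prove Theorem~\ref{kautz} by induction on $n$, in close parallel with Theorem~\ref{db}, using the iterated line graph description $\kautz_{n+1}(m)=\Li\kautz_n(m)$. For the base case $n=1$, the graph $\kautz_1(m)$ is the complete directed graph on $m+1$ vertices without self-loops, so $L(\kautz_1(m))=J_{m+1}-(m+1)I_{m+1}$. A short integer row-column reduction puts this into Smith normal form $\mathrm{diag}(1,m+1,\ldots,m+1,0)$ with $m-1$ copies of $m+1$, giving $K(\kautz_1(m))=(\zz_{m+1})^{m-1}$. This matches the formula at $n=1$ since the $(\zz_{m^{n-1}})^{m^2-2}$ block becomes $(\zz_1)^{m^2-2}$ and the sum $\bigoplus_{i=1}^{n-2}$ is empty.

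The inductive step rests on a structural lemma (the same one that drives Theorem~\ref{db}) for arbitrary balanced $m$-regular strongly connected digraphs $G$ with $N$ vertices. Writing $L(G)=ST^T-mI_N$ and $L(\Li G)=T^T S-mI_{mN}$ in terms of the vertex-edge incidence matrices $S,T$, the lemma asserts: if $d_1\mid d_2\mid\cdots\mid d_N$ are the invariant factors of $L(G)$, then the invariant factors of $L(\Li G)$ consist of $md_1,md_2,\ldots,md_N$ together with $N$ additional copies of $1$ and $(m-2)N$ additional copies of $m$. I would prove this by exhibiting an explicit sequence of unimodular integer row and column operations on an augmented block matrix built from $S$ and $T$ (for instance starting from $\begin{pmatrix} mI_N & S \\ T^T & I_{mN}\end{pmatrix}$), isolating a block $\zz$-equivalent to $mL(G)$, a further block equal to $mI_{(m-2)N}$, and an identity block of size $N$.

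Applying the lemma with $G=\kautz_n(m)$ and $N=(m+1)m^{n-1}$ drives the induction. Every non-trivial invariant factor of $L(\kautz_n(m))$ has the form $m^i$ or $m^i(m+1)$; after promotion by $m$ these become $m^{i+1}$ or $m^{i+1}(m+1)\cong \zz_{m^{i+1}}\oplus\zz_{m+1}$ by the Chinese Remainder Theorem using $\gcd(m,m+1)=1$. Consequently the $(\zz_{m+1})^{m-1}$ summand descends intact at every level of the iteration, the $m^i$-blocks shift up one power of $m$, and the $N$ extra $1$'s together with the promoted trivial factors and the $(m-2)N$ newly inserted $m$'s collect into the lowest-order $\zz_m$ block. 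A direct count of the multiplicities shows that after this transformation the top power $m^n$ occurs with multiplicity $m^2-2$, while intermediate levels $1\le i\le n-1$ occur with multiplicity $m^{n-1-i}(m-1)^2(m+1)$, which (after shifting $n\mapsto n+1$) is exactly the claim.

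The main obstacle is the rigorous proof of the structural lemma. The classical identity that $ST^T$ and $T^T S$ share non-zero eigenvalues controls the characteristic polynomial, which is all Knuth's formula requires, but it does not by itself pin down the Smith normal form over $\zz$. I would therefore need to construct unimodular integer transformations that exploit the specific structure of the incidence matrices $S$ and $T$ (rather than just the product $ST^T$), reducing $T^T S-mI_{mN}$ to the predicted block diagonal form without dividing by $m$ anywhere. Once this lemma is in hand, the Kautz induction is a clean bookkeeping exercise, made tractable by separating the $m$-primary and $(m+1)$-primary parts via coprimality.
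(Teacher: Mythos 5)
Your induction skeleton matches the paper's: base case $K(\kautz_1(m))=(\zz_{m+1})^{m-1}$, then an inductive step along $\kautz_{n+1}(m)=\Li\kautz_n(m)$, with the $(m+1)$-primary and $m$-primary parts separated by coprimality. But the engine you propose is different from, and much stronger than, what the paper actually uses --- and it is exactly the part you have not proven. The paper combines three separately established facts: (i) Levine's relation $mK(\Li G)\cong K(G)$ for balanced $m$-regular $G$, quoted from \cite{Le}; (ii) Lemma~\ref{divbym}, proven by an explicit mod-$m$ row reduction of the Laplacian organized along the cycle of Lemma~\ref{cycle}, which shows that exactly $|V|/m$ invariant factors are coprime to $m$ and the rest are divisible by $m$ (ruling out summands $\zz_{p^j}$ with $0<j<k$ when $p^k\,\|\,m$); and (iii) the order of the group, obtained from Theorem~\ref{thm1.1}. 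These three facts pin down every Sylow subgroup without ever computing the full Smith normal form of $L(\Li G)$. Your single structural lemma (invariant factors of $L(\Li G)$ are $md_1,\ldots,md_N$ together with $N$ ones and $(m-2)N$ copies of $m$) would subsume all three, and it is in fact consistent with them for these graphs, but asserting it is not proving it, and you yourself flag it as "the main obstacle."

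The concrete gap is that your proposed route to the lemma fails. Integer row and column operations on $\begin{pmatrix} mI_N & S \\ T^T & I_{mN}\end{pmatrix}$ let you take the Schur complement only with respect to the block that is invertible over $\zz$, namely $I_{mN}$: this reduces the matrix to $\left(mI_N-ST^T\right)\oplus I_{mN}$, whose cokernel is that of $L(G)$, not of $L(\Li G)$. Extracting $T^TS-mI_{mN}=L(\Li G)$ from the same augmented matrix is precisely the Schur complement with respect to the $mI_N$ block, which requires dividing by $m$ and is not a unimodular operation. So the block-matrix identity controls determinants and characteristic polynomials (enough for Knuth's count) but not Smith normal forms, which is the whole difficulty here. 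To close the gap you would either need a genuinely new unimodular reduction exploiting the structure of $S$ and $T$, or you should retreat to the paper's strategy: cite $mK(\Li G)\cong K(G)$, prove the coprime-or-divisible dichotomy for the invariant factors by the mod-$m$ row reduction along a cycle meeting each similarity class once, and use the spanning-tree count to fix the one remaining multiplicity $a_k$.
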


In order to prove these theorems, we first prove two lemmas about row-reducing the Laplacians $L(\kautz_n(m))$ and $L(\db_n(m))$. We refer to the row and column of a vertex $v$ in the Laplacian by $R(v)$ and $C(v)$, respectively. We also use $L(v,w)$ to denote the entry in the row of $v$ and the column of $w$. 

We say two strings of length $n$ are \textit{similar} if their last $n-1$ characters are equal. Similarity is an equivalence relation. We partition the vertices of $\kautz_n(m)$ and $\db_n(m)$ into equivalence classes, by grouping vertices labeled with similar strings in the same class. There are $m$ vertices in each class.

\begin{lemma} \label{cycle} Let $G=(V,E)$ be a Kautz graph $\kautz_{n+1}(m)$ or a de Bruijn graph $\db_{n+1}(m)$, where $n\in \nn$. Then $G$ contains a cycle $(v_1,v_2,\ldots v_c)$ of length $c=|V|/m$ which contains one vertex from each class. \end{lemma}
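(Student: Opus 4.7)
The plan is to reduce the existence of the required cycle to the classical existence of a Hamiltonian cycle in the line-graph predecessor, using the structure $\db_{n+1}(m)=\Li\db_n(m)$ (and similarly for Kautz).

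First, I would exploit the line-graph identification of vertices of $\db_{n+1}(m)$ with edges of $\db_n(m)$. Under this identification, the suffix map $\mathrm{suff}\colon s_0s_1\ldots s_n\mapsto s_1\ldots s_n$ sends a vertex of the big graph to the target (in the small graph) of the corresponding edge, and two vertices are similar precisely when the corresponding edges share a target. A directed edge $v\to w$ in $\db_{n+1}(m)$ is, by the line-graph definition, the statement that the edge $v$ ends where the edge $w$ begins in $\db_n(m)$, so in particular $(\mathrm{suff}(v),\mathrm{suff}(w))$ is joined by the small-graph edge $w$. It follows that any cycle $(v_1,\ldots,v_c)$ in $\db_{n+1}(m)$ that hits one vertex per similarity class projects under $\mathrm{suff}$ to a sequence in $\db_n(m)$ visiting every vertex exactly once, i.e., a Hamiltonian cycle. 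Conversely, a Hamiltonian cycle in $\db_n(m)$, viewed as a cyclic list of edges $e_1,e_2,\ldots,e_c$, reinterprets each $e_i$ as a vertex of $\db_{n+1}(m)$ and produces a cycle of the required form (each vertex of $\db_n(m)$ is the target of exactly one edge in the Hamiltonian cycle, so each similarity class is hit exactly once). The same argument runs verbatim for Kautz graphs, since the constructions only move among existing Kautz strings and hence preserve the no-equal-consecutive-symbols constraint.

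The proof then reduces to the classical existence of Hamiltonian cycles in $\db_n(m)$ and $\kautz_n(m)$. Applying the same line-graph translation once more, such a Hamiltonian cycle corresponds to an Eulerian circuit in $\db_{n-1}(m)$ or $\kautz_{n-1}(m)$, and both of these graphs are strongly connected and balanced $m$-regular, so an Eulerian circuit exists by Euler's theorem. I expect the main difficulty to be purely notational: carefully aligning the source/target conventions of the line-graph identification with the suffix projection, and checking the small-$n$ base cases (for instance $n=1$, where the smaller graph is already $\db_1(m)$ or $\kautz_1(m)$ and must be verified directly) so that the correspondence is not vacuous.
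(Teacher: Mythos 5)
Your proposal is correct and takes essentially the same route as the paper: the paper also lifts a Hamiltonian cycle of the predecessor graph (obtained from an Eulerian circuit of the predecessor's predecessor for $n>1$, and verified directly in the complete-graph base case) to the required cycle, writing the lift via the string representation $v_i=s_is_{i+1}\ldots s_{i+n}$ rather than your equivalent edge-to-vertex reinterpretation.
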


\begin{proof} Let $G'$, the predecessor of $G$, be $\kautz_n(m)$ if $G$ is $\kautz_{n+1}(m)$, and $\db_n(m)$ if $G$ is $\db_{n+1}(m)$. 

First we show that there is a Hamiltonian cycle in $G'$. Such a cycle exists in the complete graphs $K_m$ and $K_{m+1}$, so the case $n=1$ is done. There is an Eulerian tour of $\kautz_{n-1}(m)$ and of $\db_{n-1}(m)$ for $n>1$, since graphs in both families are Eulerian. Because $G'$ is either $\Li \kautz_{n-1}(m)$ or $\Li \db_{n-1}(m)$, one of these Eulerian tours induces a Hamiltonian cycle in $G'$, for $n>1$.

The Hamiltonian cycle in $G'$ can be represented as a string $S=s_1s_2\ldots s_{n+c-1}$, where the $i$th vertex of the cycle is labeled with $s_is_{i+1}\ldots s_{i+n-1}$. 

We use string $S$ to find a cycle in $G$.  Let $v_i=s_is_{i+1}\ldots s_{i+n}$ for $i<c$, and let $v_c=s_cs_{c+1}\ldots s_{n+c-1}s_1$. By the construction of $S$, $(v_1,v_2,\ldots v_c)$ is a cycle which contains one vertex from each class. \end{proof} \vspace{.05cm}

In the next lemma we show that every invariant factor of $L(\kautz_{n+1}(m))$ and $L(\db_n(m))$ is either a multiple of $m$ or relatively prime to $m$. We prove this lemma by row-reducing the Laplacian in an order derived from the cycle in Lemma~\ref{cycle}. 

\begin{lemma} \label{divbym} Let $G=(V,E)$ be a Kautz graph $\kautz_{n+1}(m)$ or a de Bruijn graph $\db_{n+1}(m)$, where $n\in \nn$. The first $c=|V|/m$ invariant factors of $L(G)$ are relatively prime to $m$, and all of the rest are divisible by $m$. \end{lemma}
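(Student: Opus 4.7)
The plan is to prove Lemma~\ref{divbym} by performing an explicit row and column reduction of $L(G)$ that exposes a factor of $m$ in the last $|V|-c$ rows, and then analyzing the resulting block-triangular matrix one prime at a time. Let $d_1\mid d_2\mid\cdots\mid d_{|V|}$ denote the invariant factors of $L(G)$. I would first order the vertices so that the $c$ cycle vertices $v_1,\dots,v_c$ from Lemma~\ref{cycle} come first, followed by the remaining $m-1$ vertices of each similarity class. Because any two similar vertices have the same out-neighborhood they contribute identical rows to $A(G)$, so for each non-cycle vertex $u$ with class representative $v(u)$ the operation $R(u)\leftarrow R(u)-R(v(u))$ replaces $R(u)$ by $m(e_{v(u)}-e_u)$, which is divisible by $m$. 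Following this by the column operations that add each non-cycle column to the column of its class representative clears the bottom-left block and leaves $L$ in the form
\[
L''=\begin{pmatrix} L_{11}' & L_{12} \\ 0 & -mI \end{pmatrix},
\]
with $L_{11}'$ of size $c\times c$.

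Next I would establish $\mathrm{rank}_{\ff_p}(A(G))=c$ for every prime $p\mid m$. The upper bound $\mathrm{rank}_{\ff_p}(A)\le c$ is immediate from the identical rows within each class. For the matching lower bound I would use the cycle: the column of $A$ indexed by $v_j$ is the indicator of the class containing $v_{j-1}$, because the $m$ in-neighbors of $v_j$ are exactly that class; consequently the $c$ columns indexed by cycle vertices have pairwise disjoint support and are linearly independent over $\ff_p$. Since $L\equiv A\pmod p$, this gives $\dim_{\ff_p}\bigl(\mathrm{coker}(L)\otimes\ff_p\bigr)=|V|-c$, so exactly $|V|-c$ invariant factors of $L$ are divisible by $p$; the divisibility chain forces these to be $d_{c+1},\dots,d_{|V|}$, and hence $d_1,\dots,d_c$ are coprime to every prime dividing $m$, and therefore to $m$ itself.

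The main obstacle is the remaining refinement: upgrading ``divisible by $p$'' to ``divisible by $p^{v_p(m)}$'' for each $p\mid m$, which an $\ff_p$-rank argument cannot deliver on its own. Here the block form of $L''$ is the decisive leverage. Setting $a=v_p(m)$, the entire bottom band of $L''$ vanishes modulo $p^a$ (the left block is already zero, and $-mI\equiv 0\pmod{p^a}$ since $p^a\mid m$), so the image of $L''$ modulo $p^a$ is contained in the first $c$ coordinates and has cardinality at most $p^{ac}$. This forces $|\mathrm{coker}(L)\otimes\zz/p^a|\ge p^{a(|V|-c)}$. The first $c$ invariant factors contribute trivially to this tensor product, so $\sum_{i>c}\min(a,v_p(d_i))\ge a(|V|-c)$; but each of the $|V|-c$ summands is at most $a$, so equality must hold term by term and $v_p(d_i)\ge a$ for every $i>c$. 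Collecting this conclusion across all primes dividing $m$ gives $m\mid d_i$ for every $i>c$, completing the proof.
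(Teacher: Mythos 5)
Your proof is correct, and its combinatorial core coincides with the paper's: both rely on the cycle of Lemma~\ref{cycle}, on the fact that similar vertices have identical rows in $A(G)$ (so subtracting the class representative's row leaves a row equal to $m(e_{v(u)}-e_u)$), and on the fact that the in-neighborhood of each $v_j$ is exactly the class $[v_{j-1}]$, which is what makes the $c$ cycle columns independent. Where you diverge is the endgame. The paper finishes the reduction entirely over $\zz/m\zz$, pushing the matrix all the way to the diagonal form $\bigl(\begin{smallmatrix} I_c & 0 \\ 0 & 0\end{smallmatrix}\bigr)$ mod $m$ and reading the invariant factors off directly; you instead stop at the block-triangular form with $-mI$ in the corner and extract the conclusion one prime at a time, via an $\ff_p$-rank count for the first $c$ invariant factors and a cardinality bound on $\mathrm{coker}(L)\otimes\zz/p^{a}$ for the remaining ones. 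Your version is slightly longer but arguably more careful: the paper calls $\zz_m$ a principal ideal domain (it is not even a domain for composite $m$) and implicitly invokes uniqueness of Smith form over that ring, whereas your term-by-term pinching of $\sum_{i>c}\min(a,v_p(d_i))\ge a(|V|-c)$ needs only the structure theorem over $\zz$. Both arguments are sound; nothing is missing from yours.
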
 

\begin{proof} We reduce the Laplacian $L(G)$ over the principal ideal domain $\zz_m$. Let the invariant factors of $L(G)$ over $\zz$ be $x_1,x_2,\ldots x_{|V|}$. Any invertible row or column operation over $\zz$ descends to an invertible operation over $\zz_m$, so the invariant factors of $L(G)$ over $\zz_m$ are the $x_i$ mod $m$.

Let $(v_1,v_2,\ldots v_c)$ be the cycle in $G$ from Lemma~\ref{cycle}, and let $[v_i]$ be the set of vertices in the class of $v_i$. We take indices mod $c$, so $v_{c+1}$ is $v_1$. 

Note that if $u$ and $v$ are vertices in the same class, then $(u,w)$ is an edge if and only if $(v,w)$ is, for all $w$. Therefore, the rows of $u$ and $v$ in the adjacency matrix $A(G)$ are the same.

Because every vertex of $G$ has outdegree $m$, $L(G)\equiv A(G)$ mod $m$. Therefore rows $R(u)$ and $R(v)$ are congruent mod $m$ if $u$ and $v$ are in the same class. We reduce the Laplacian in $c$ stages. In the $i$th stage, we subtract row $R(v_i)$ from $R(v)$ for all $v\in [v_i]\backslash v_i$. After this operation, every entry of $R(v)$ is divisible by $m$. 
{\small \[\mbox{\bordermatrix{%
    & 01 & 02 & 10 & 12 & 20 & 21 \cr
  01 & -2 & 0 & 1 & 1 & 0 & 0 \cr
  02 & 0 & -2 & 0 & 0 & 1 & 1 \cr
  10 & 1 & 1 & -2 & 0 & 0 & 0 \cr
  12 & 0 & 0 & 0 & -2 & 1 & 1 \cr
  20 & 1 & 1 & 0 & 0 & -2 & 0 \cr
  21 & 0 & 0 & 1 & 1 & 0 & -2 \cr}} \rightarrow \mbox{\bordermatrix{%
    & 01 & 02 & 10 & 12 & 20 & 21 \cr
  01 & -2 & 0 & 1 & 1 & 0 & 0 \cr
  02 & 0 & -2 & 0 & 2 & 0 & 0 \cr
  10 & 0 & 0 & -2 & 0 & 2 & 0 \cr
  12 & 0 & 0 & 0 & -2 & 1 & 1 \cr
  20 & 1 & 1 & 0 & 0 & -2 & 0 \cr
  21 & 2 & 0 & 0 & 0 & 0 & -2 \cr}} \] }
\begin{center} \textit{Figure 5.1. Reducing $L(\kautz_2(2))$ using the cycle $(01,12,20)$. The original Laplacian is on the left. We obtain the reduced Laplacian on the right by subtracting $R(01)$ from $R(21)$, $R(12)$ from $R(02)$, and $R(20)$ from $R(10)$. Every entry of rows $R(02)$, $R(10)$, and $R(21)$ is divisible by 2.}\end{center} 

The entry $L(v_i,v_{i+1})$ is 1 before and after these row operations, for $1\le i\le c$. We claim that in the reduced Laplacian, every entry of $C(v_{i+1})$ is divisible by $m$ except $L(v_i,v_{i+1})$. There are $m$ edges with target $v_{i+1}$ in $G$. The sources of these edges are the $m$ vertices in $[v_i]$. After the row operations, every entry of $R(v)$ is divisible by $m$ for $v\in [v_i]\backslash v_i$, so $L(v_i,v_{i+1})$ is the only entry in $C(v_{i+1})$ which is non-zero mod $m$, for $1\le i\le c$. 

By permuting rows and columns, we move $L(v_i,v_{i+1})$ to the $i$th diagonal entry $L_{ii}$ of the Laplacian. The reduced Laplacian is now in the form
\[\begin{pmatrix}I_c & A \\ 0 & 0\end{pmatrix}\ \text{mod } m\]
where $I_c$ is the $c\times c$ identity matrix. Using column operations, we can make all the entries in A divisible by $m$, without changing the rest of the matrix mod $m$. After this column operations, the Laplacian is in Smith normal form.

The first $c$ invariant factors of $L(G)$ over $\zz_m$ are 1, so the first $c$ invariant factors of $L(G)$ over $\zz$ are relatively prime to $m$. The last $|V|-c$ invariant factors of $L(G)$ over $\zz_m$ are 0, so the last $|V|-c$ invariant factors of $L(G)$ over $\zz$ are divisible by $m$. The lemma holds.
\end{proof}

We use Lemmas~\ref{cycle} and \ref{divbym} to characterize the critical group of the Kautz and de Bruijn graphs. The first step is finding the orders of these groups. We apply Theorem~\ref{thm1.1} to $\db_n(m)$, and we let all the variables $x_e$ equal 1, to find that
\[\kappa\left(\db_{n+1}(m)\right)=\kappa\left(\db_n(m) \right)\left( m^{(m-1)m^n}\right)\]
The number of spanning trees of the complete graph $\db_1(m)$ is $m^{m-1}$ \cite{Ho}. By a simple induction, we have
\[\kappa\left(\db_n(m)\right)=m^{m^n-1}\]
In an Eulerian graph, the sandpile groups $K(G,v)$ are isomorphic for all vertices $v$, so $|K(G)|=\kappa(G)/|V|$. Therefore, we have
\begin{equation}\label{dborder}  |K\left(\db_n(m)\right)|=m^{m^n-n-1}\end{equation}
Similarly, we have
\[\kappa\left(\kautz_n(m)\right)=(m+1)^mm^{\left(m^n-1\right)(m+1)}\]
\begin{equation}\label{order}|K\left(\kautz_n(m)\right)|=(m+1)^{m-1}m^{\left(m^n+m^{n-1}-m-n\right)}\end{equation}
We are ready to prove theorems~\ref{db} and ~\ref{kautz}.

\begin{proof}[\textbf{Proof of Theorem~\ref{db}}] We proceed by induction on $n$. The critical group of the complete graph on $m$ vertices is $(\zz_m)^{m-2}$, so the base case holds.

Assume that Theorem~\ref{db} holds for $n-1$, where $n>1$. We prove it for $n$. As shown by Levine \cite{Le}, if $G$ is a balanced $k$-regular graph, then
\begin{equation} \label{homo} k K(\Li G) \cong K(G)\end{equation}
We will use this fact to determine $Syl_p(K(\db_n(m)))$, the Sylow-$p$ subgroup of $K(\db_n(m))$, for any prime $p$. We break into two cases: either $p$ does not divide $m$, or $p$ divides $m$.

If $p$ does not divide $m$, then by Eq. (\ref{homo}), we have 
\[\text{Syl}_p\left(K\left(\db_n(m)\right)\right)\cong \text{Syl}_p\left(K\left(\db_{n-1}(m)\right)\right)\] 
By the inductive hypothesis, $\text{Sylow}_p(K(\db_n(m)))$ is the trivial group. 

Now let $p$ be a prime that divides $m$, and suppose $p^k$ is the largest power of $p$ that divides $m$. Let the Sylow-$p$ subgroup of $\db_n(m)$ be 
\[\text{Syl}_p\left(K\left(\db_n(m)\right)\right) = \zz_p^{a_1}\oplus \zz_{p^2}^{a_2} \oplus \ldots \oplus \zz_{p^l}^{a_l}\]
By Lemma~\ref{divbym}, $K(\db_n(m))$ can be written as a direct sum of cyclic groups, such that the order of each group is either non-zero mod $p$ or divisible by $p^k$. Thus, $a_i=0$ for $i<k$. Further, we can derive the order of $\text{Syl}_p(K(\db_n(m)))$ from Eq. (\ref{dborder}). We find that
\begin{equation}\label{dbporder} \sum_{i=n}^l ia_i = k\left(m^n-n-1\right) \end{equation}
because the expression on the right-hand side equals the number of factors of $p$ in $m^{m^n-n-1}$. By Eq. (\ref{homo}) and the inductive hypothesis, we know that
\[p^k\text{Syl}_p\left(K\left(\db_n(m)\right)\right)=\zz_p^{a_{k+1}}\oplus \zz_{p^2}^{a_{k+2}} \oplus \ldots \oplus \zz_{p^{l-k}}^{a_{l}} \cong \text{Syl}_p\left(K\left(\db_{n-1}(m)\right)\right)\]
\begin{equation}\label{dbindstep} \zz_p^{a_{k+1}}\oplus \zz_{p^2}^{a_{k+2}} \oplus \ldots \oplus \zz_{p^{l-k}}^{a_{l}} \cong \left(\zz_{p^{nk}}\right)^{m-2}\oplus \bigoplus_{i=1}^{n-2}\left(\zz_{p^{ik}}\right)^{m^{n-2-i}(m-1)^2} \end{equation}
Eq. (\ref{dbindstep}) implies that $a_{nk}=m-2$, that $a_{(i+1)k}=m^{n-2-i}(m-1)^2$ for $1\le i \le n-2$, and that $a_i=0$ for $i>nk$ or $k\nmid i$. The only $a_i$ which we have not yet determined is $a_k$. We solve Eq. (\ref{dbporder}) for $a_k$, by moving all the other $a_{ik}$ to the right-hand side and dividing by $k$.
\[a_k = \left(m^n-n-1\right)-n(m-2)-\sum_{i=2}^{n-1} i\left(m^{n-1-i}(m-1)^2\right)\]
By evaluating the geometric series, we find that $a_k=m^{n-2}(m-1)^2$. With these values, we may write
\[\text{Syl}_p\left(K\left(\db_n(m)\right)\right)=\left(\zz_{p^{nk}}\right)^{m-2}\oplus \bigoplus_{i=1}^{n-1}\left(\zz_{p^{in}}\right)^{m^{n-1-i}(m-1)^2}\]
The Sylow-$p$ subgroups of $K(\db_n(m))$ are trivial for $p\nmid m$. Taking the direct sum of the Sylow-$p$ subgroups over primes $p$ which divide $m$, we find
\[ K\left(\db_n(m)\right)\cong \bigoplus_{p\mid m}\text{Sylow}_p\left(K\left(\db_n(m)\right)\right)\cong\left(\zz_{m^n}\right)^{m-2}\oplus\bigoplus_{i=1}^{n-1} \left(\zz_{m^i}\right)^{m^{n-1-i}(m-1)^2} \]
With this equation we complete the inductive step, as desired.
\end{proof} \vspace{.05in}

\begin{proof}[\textbf{Proof of Theorem~\ref{kautz}}] This proof is similar to the proof of Theorem~\ref{db}. Again, we induct on $n$. Because the critical group of the complete graph on $m+1$ vertices is $(\zz_{m+1})^{m-1}$, the base case holds.

Assume that Theorem~\ref{kautz} holds for $n-1$, where $n>1$. Using Eq. (\ref{homo}), we calculate the direct sum of the Sylow-$p$ subgroups of $K(\kautz_n(m))$ over primes $p$ which do not divide $m$, as follows
\begin{equation}\label{nodivide} \bigoplus_{p\nmid m} \text{Syl}_p\left( K\left(\kautz_n(m)\right) \right)= \bigoplus_{p\nmid m} \text{Syl}_p \left(K\left(\kautz_1(m)\right)\right) = \left(\zz_{m+1}\right)^{m-1} \end{equation} 
Now let $p$ be a prime that divides $m$. Suppose that $p^k$ is the largest power of $p$ that divides $m$, and that the Sylow-$p$ subgroup of $K(\kautz_n(m))$ is
\[\text{Syl}_p\left(K\left(\kautz_n(m)\right)\right) = \zz_p^{a_1}\oplus \zz_{p^2}^{a_2} \oplus \ldots \oplus \zz_{p^l}^{a_l}\]
Lemma~\ref{divbym} implies that $a_i=0$ for $i<k$. Furthermore, we know the order of the Sylow-$p$ subgroup of $K(\kautz_n(m))$ from Eq. (\ref{order}), which implies that 
\begin{equation}\label{porder} \sum_{i=k}^l ia_i = k\left(m^n+m^{n-1}-m-n\right) \end{equation}
because the expression on the right-hand side equals the number of factors of $p$ in $m^{(m^n+m^{n-1}-m-n)}$. By Eq. (\ref{homo}), we have
\[p^k\text{Syl}_p\left(K\left(\kautz_n(m)\right)\right)=\zz_p^{a_{k+1}}\oplus \zz_{p^2}^{a_{k+2}} \oplus \ldots \oplus \zz_{p^{l-k}}^{a_{l}} \cong \text{Syl}_p\left(K\left(\kautz_{n-1}(m)\right)\right)\]
By the inductive hypothesis, we find that $a_{(i+1)k}={m^{n-3-i}(m-1)^2(m+1)}$ for $1\le i \le n-3$, that $a_{(n-1)k}=m^2-2$, and that $a_i=0$ for $i>(n-1)k$ or $k\nmid i$. 

We solve Eq. (\ref{porder}) for $a_k$. We find that $a_k=m^2-2$ if $n=2$ and that $a_k=m^n-m^{n-1}-m^{n-2}+1$ if $n>2$. Then we have
\begin{equation} \label{divide} \text{Syl}_p\left(K\left(\kautz_n(m)\right)\right)=\left(\zz_{p^{(n-1)k}}\right)^{m^2-2}\oplus \bigoplus_{i=1}^{n-2}\left(\zz_{p^{ik}}\right)^{m^{n-2-i}(m-1)^2(m+1)} \end{equation}
By taking the direct sum of Eq. (\ref{nodivide}) and Eq. (\ref{divide}) over all primes which divide $m$, we complete the inductive step and prove Theorem~\ref{kautz}, as desired.
\end{proof}\vspace{.1in}

\end{document}